\newtheorem{thm}{Theorem}
\newtheorem{cor}[thm]{Corollary}
\newtheorem{lemma}[thm]{Lemma}
\newtheorem{prop}[thm]{Proposition}
\newtheorem{defn}[thm]{Definition}
\newcommand{\R}{\mathbb{R}}
\newcommand{\F}{\mathbb{F}}
\newcommand{\C}{\mathbb{C}}
\newcommand{\inprod}[2]{\left\langle #1, #2 \right\rangle}
\renewcommand{\Re}{\operatorname{Re}}
\newcommand{\abs}[1]{\left\vert #1 \right\vert}
\newcommand{\norm}[1]{\left\Vert #1 \right\Vert}
\newcommand{\eps}{\varepsilon}
\newcommand{\dfn}[1]{{\bf #1}}
\newcommand{\Set}[2]{\left\{#1 \mathrel{} \middle| \mathrel{} #2 \right\}}
\DeclareMathOperator{\vol}{vol}
\newcommand{\udivdim}{\overline{\dim}_{\mathrm{Div}}}
\newcommand{\umagdim}{\overline{\dim}_{\mathrm{Mag}}}
\newcommand{\uboxdim}{\overline{\dim}_{\mathrm{Mink}}}
\newcommand{\ldivdim}{\underline{\dim}_{\mathrm{Div}}}
\newcommand{\lmagdim}{\underline{\dim}_{\mathrm{Mag}}}
\newcommand{\lboxdim}{\underline{\dim}_{\mathrm{Mink}}}
\newcommand{\divdim}{\dim_{\mathrm{Div}}}
\newcommand{\magdim}{\dim_{\mathrm{Mag}}}
\newcommand{\boxdim}{\dim_{\mathrm{Mink}}}
\DeclareMathOperator{\diam}{diam}
\numberwithin{thm}{section}
\numberwithin{equation}{section}
\author{Mark W.\ Meckes}
\title[Magnitude, \&c.]{Magnitude, diversity, capacities, and
  dimensions of metric spaces}
\address{Case Western Reserve University, Cleveland, OH 44106, U.S.A.
Phone: +01-216-368-4997.  Fax: +01-216-368-5163.}
\email{mark.meckes@case.edu}
\keywords{Magnitude of metric spaces; capacity; Minkowski dimension}
\subjclass[2010]{Primary 51F99; Secondary 31B15, 49Q15}
\begin{document}

\begin{abstract}
  Magnitude is a numerical invariant of metric spaces introduced by
  Leinster, motivated by considerations from category theory.  This
  paper extends the original definition for finite spaces to compact
  spaces, in an equivalent but more natural and direct manner than in
  previous works by Leinster, Willerton, and the author.  The new
  definition uncovers a previously unknown relationship between
  magnitude and capacities of sets.  Exploiting this relationship, it
  is shown that for a compact subset of Euclidean space, the magnitude
  dimension considered by Leinster and Willerton is equal to the
  Minkowski dimension.
\end{abstract}

\maketitle


\section{Introduction}

The magnitude of a metric space is a numerical isometric invariant
introduced by Leinster in \cite{Leinster}.  From the perspective of
geometry, its definition was motivated in a rather unusual way.  In
\cite{Leinster-Euler}, Leinster had defined the Euler characteristic
of a finite category, which generalizes the Euler characteristic of a
topological space or of a poset.  This notion of Euler characteristic
can be naturally generalized from categories to enriched
categories, a family of algebraic structures which, as observed by
Lawvere in \cite{Lawvere}, includes metric spaces; in this context the
generalization of Euler characteristic is named ``magnitude''.
Specialized then to metric spaces, one obtains Leinster's definition
of the magnitude of a finite metric space, stated in Definition
\ref{D:finite-magnitude} below.  Magnitude was extended to compact
metric spaces in multiple ways in
\cite{Leinster,LW,Willerton-heuristic,Willerton-homog}, which were
shown by the author in \cite{Meckes} to agree with each other for
many spaces (specifically, for so-called positive definite spaces,
which include all compact subsets of Euclidean space).

Given this exotic provenance, it may come as a surprise that magnitude
turns out to be closely related to classical invariants of integral
geometry; see \cite{Leinster,LW,Willerton-homog} for a number of
results along these lines.  Conjectures in \cite{Leinster,LW}, which
are supported by partial results in those papers and by heuristics and
numerical computations in \cite{Willerton-heuristic}, suggest that the
relationship between magnitude and integral geometry runs deeply
enough that all the intrinsic volumes of convex bodies can be
recovered from magnitude.  A second surprise, in a completely
different direction, is that the magnitude of a finite metric space
has been introduced in the literature before, in connection with
quantifying biodiversity in \cite{SoPo}.  Although the theory of
magnitude was not developed at all in \cite{SoPo}, the relationship
between magnitude and diversity has been investigated more fully in
\cite{Leinster-entropy}.

The present work grew out of the author's search for a more
satisfactory definition of magnitude for compact metric spaces.  The
approach of \cite{Meckes} was to introduce yet another definition, a
measure-theoretic generalization of a variational formula for the
magnitude of a finite positive definite space derived in
\cite{Leinster-entropy,Leinster}, and to prove that this new
definition agrees with all the earlier ones.  Here we instead take a
more functional-analytic approach to generalize directly the original
definition of magnitude for finite metric spaces.  Besides the
{\ae}sthetic appeal of a more direct approach, the resulting
definition, which again agrees with the earlier ones, can be used to
prove new properties of magnitude in Euclidean space.  More
interestingly, it uncovers previously unknown connections between
magnitude and potential theory.  In fact, another surprise is that the
magnitude of a compact subset of Euclidean space has (almost) been
introduced in an equivalent form in the literature before both
\cite{Leinster} and \cite{SoPo}, as a type of capacity.  (It seems
likely that magnitude is the unique notion to have arisen
independently in potential theory, theoretical ecology, and category
theory.)

The relationship between magnitude and capacity has important
consequences.  There is a notion of dimension associated to magnitude
which was first investigated in \cite{LW}, and which provided some of
the first compelling evidence that magnitude encodes interesting
geometric information.  Using in part a deep result on relationships
between different capacities, we will see that in Euclidean space,
this magnitude dimension turns out to be the same as Minkowski
dimension.  In establishing this result, we find an apparently new
formulation of Minkowski dimension in terms of capacity.  In addition,
the conjectures from \cite{Leinster,LW} mentioned earlier would, if
true, indicate previously unknown connections between capacities and
intrinsic volumes of convex bodies.

\medskip

The rest of this paper is organized as follows.  Section
\ref{S:finite-spaces} presents Leinster's original definition of the
magnitude of a finite metric space and some related definitions we
will need.  Section \ref{S:weightings} develops the
functional-analytic generalization of the definition of magnitude for
compact spaces.  Section \ref{S:potential} presents a dual perspective
on the definitions of Section \ref{S:weightings}, and discusses a
quantity closely related to magnitude, the maximum diversity of a
metric space.  Section \ref{S:Euclidean} specializes the constructions
of the previous sections to subsets of Euclidean space, and uses them
to prove new results about the behavior of magnitude in Euclidean
space.  Section \ref{S:capacity} discusses the connections between
magnitude, maximum diversity, and capacities.  Section
\ref{S:dimension} proves a new characterization of Minkowski dimension
in terms of maximum diversity, and uses a result from potential theory
recalled in Section \ref{S:capacity} to deduce that magnitude
dimension and Minkowski dimension are equal in Euclidean space.
Finally, in Section \ref{S:ultrametric}, we briefly investigate
another, closely related instance of the magnitude of an enriched
category: the case of ultrametric spaces, whose theory turns out to be
much simpler but nevertheless intriguingly similar to that of metric
spaces.

\subsection*{Acknowledgements}

The author thanks Pierre Albin, Shiri Artstein-Avidan, Harsh Mathur,
and especially Tom Leinster for helpful discussions.  Particular
thanks are due to Charles Clum for convincing the author of the
usefulness of the potential function in analyzing magnitude.  This
work was supported in part by NSF grant DMS-0902203.


\section{Finite metric spaces}
\label{S:finite-spaces}

We now recall the definition of the magnitude of a finite metric space.

\begin{defn} \label{D:finite-magnitude}
Given a finite metric space $(A, d)$, define the matrix $\zeta
\in \R^{A \times A}$ by $\zeta(a,b) := e^{-d(a,b)}$. A vector $w \in
\R^A$ is a \dfn{weighting} for $A$ if for each $a \in A$,
\begin{equation} \label{E:zeta-w} 
  (\zeta w)(a) = \sum_{b \in A} e^{-d(a,b)} w(b) = 1.
\end{equation}
If $A$ possesses a weighting $w$, then the \dfn{magnitude} of $A$ is
\begin{equation} \label{E:finite-magnitude} 
  \abs{A} := \sum_{a \in A} w(a).
\end{equation}
\end{defn}

An arbitrary metric space need not possess a weighting (see
\cite[Example 2.2.7]{Leinster}), nor must a weighting be unique.
However, it is easy to check that if a weighting exists, then the
value of the sum in \eqref{E:finite-magnitude} is independent of the
weighting.  Thus this definition of magnitude can only be defined on a
proper subclass of the class of finite metric spaces, but magnitude is
well-defined on that subclass.  The magnitude of $A$ is always defined
if the matrix $\zeta$ is invertible, which is the case whenever $A$ is
a subset of Euclidean space \cite[Theorem 2.5.3]{Leinster}.

There is an arbitrary choice of scale implicit in Definition
\ref{D:finite-magnitude}: instead of the metric space $A=(A,d)$, one
may equally well consider any of the metric spaces $tA = (A,td)$ for
$t > 0$.  We thus define the \dfn{magnitude function} of $A$ to be the
function $t \mapsto \abs{tA}$ for $t > 0$.  In general the magnitude
function may be only partially defined, but it is always defined for
all but finitely many values of $t$ \cite[Proposition
2.2.6]{Leinster}.

Observe that if $A \subseteq \R^n$, then the metric space $tA$ as
defined above is isometric to the space $\Set{ta}{a \in A} \subseteq
\R^n$; as is standard, we will thus use $tA$ to denote this latter set
without fear of ambiguity.  In particular, the magnitude function of a
finite subset of Euclidean space is defined everywhere.

If we write $\zeta_t$ for the matrix associated to $tA$ by Definition
\ref{D:finite-magnitude}, so that $\zeta_t(a,b) = e^{-t d(a,b)}$, then
$\zeta_t$ tends toward the identity matrix indexed by $A$ when $t \to
\infty$.  From this it follows that for sufficiently large $t$, $tA$
has a unique weighting which tends to the vector whose entries are all
$1$, and the magnitude of $tA$ tends to the cardinality of $A$.  (See
\cite[Proposition 2.2.6]{Leinster} for more details.) This suggests
the intuitive interpretation of magnitude as the ``effective number of
points'' of $A$ when $A$ is viewed at a particular scale.  (Indeed, in
\cite{SoPo}, the magnitude of $A$ is called the ``effective number of
species'' in an ecosystem whose species are represented by the points
of $A$, when $e^{-d(a,b)}$ is interpreted as the similarity between
species $a$ and species $b$.)


\section{Weightings and magnitude}
\label{S:weightings}

In this and the next section, it will make no difference whether we
choose to work with real or complex scalars.  However, in section
\ref{S:Euclidean}, Fourier-analytic tools will be brought to bear and
it will be more natural to work with complex scalars.  For now we
write $\F$ for the scalar field, which may be taken as either $\R$ or
$\C$.

It is useful to introduce an ambient, possibly noncompact metric space
$(X, d)$ containing a compact space $A$ whose magnitude we wish to
consider, particularly in order to apply Fourier analysis when $A$ is
a subset of Euclidean space. Denote by $FM(X)$ the space of finitely
supported, finite signed (if $\F = \R$) or complex (if $\F = \C$)
measures on $X$.  Define a symmetric bilinear (if $\F = \R$) or
Hermitian sesquilinear (if $\F = \C$) form
$\inprod{\cdot}{\cdot}_\mathcal{W}$ on $FM = FM(X)$ by
\[
\inprod{\mu}{\nu}_\mathcal{W} := \int \int e^{-d(a,b)} \ d\mu(a) \
d\overline{\nu}(b),
\]
where $\overline{\nu}$ denotes the complex conjugate of a complex
measure $\nu$.  The metric space $(X, d)$ is said to be \dfn{positive
  definite} if $\inprod{\cdot}{\cdot}_\mathcal{W}$ is a positive
definite inner product. Equivalently, $X$ is positive definite if for
each nonempty finite subset $A \subseteq X$, the associated symmetric
matrix $\zeta$ from Definition \ref{D:finite-magnitude} is positive
definite.  The phrase ``positive definite metric space'' will be
abbreviated as \dfn{PDMS}.  In particular, $\R^n$ is positive definite
\cite[Theorem 2.5.3]{Leinster}.

In order to talk sensibly of the magnitude of $tA$ for each $t > 0$,
we will sometimes need the additional assumption that $tA$ is a PDMS
for each $t$. By \cite[Theorem 3.3]{Meckes}, this property of $A$ is
equivalent to the classical property of \dfn{negative type} (whose
original definition will not be needed here).  Every subset $A
\subseteq \R^n$ is of negative type; many other spaces of negative
type which are of interest are collected in \cite[Theorem
3.6]{Meckes}.

For the rest of this paper $(X, d)$ is assumed to be a PDMS.  Let
$\mathcal{W}$ denote the completion of $FM$ with respect to the inner
product $\inprod{\cdot}{\cdot}_\mathcal{W}$; we call $\mathcal{W}$ the
\dfn{weighting space} of $X$.  For a compact subset $A \subseteq X$,
denote by $\mathcal{W}_A$ the closure in $\mathcal{W}$ of $FM(A)$.
Note that $\mathcal{W}_A$ is simply the weighting space of $A$, and is
independent of the ambient space $X$.  (Another characterization of
$\mathcal{W}$, which may feel more concrete to some readers, will be
given in Section \ref{S:potential} below.)

Denote by $C^{1/2}$ the space of H\"older continuous functions $f:X
\to \F$ with exponent $1/2$, equipped with the norm
\[
\norm{f}_{C^{1/2}} := \max\left\{ \norm{f}_\infty,
\sup_{x,y \in X, \ x\neq y} \frac{\abs{f(x) -
    f(y)}}{\sqrt{2 d(x,y)}}\right\}.
\]
We adopt this slightly unusual version of the $C^{1/2}$ norm purely
for convenience; any equivalent norm would work equally well for our
purposes.

For $\mu \in FM$, define $Z\mu:X \to \F$ by
\begin{equation}\label{E:Z}
  Z\mu(x) := \int e^{-d(x,y)} \ d\mu(y),
\end{equation}
so that
\begin{equation}\label{E:FM-FM}
  \inprod{\mu}{\nu}_{\mathcal{W}} = \int (Z \mu) \ d\overline{\nu}
  = \inprod{Z\mu}{\overline{\nu}}
\end{equation}
for each $\mu, \nu \in FM$, where $\inprod{\cdot}{\cdot}$ denotes the
standard bilinear pairing between functions and measures.

\begin{lemma}\label{T:FM-bounded}
  If $\mu \in FM$, then $Z\mu \in C^{1/2}$ and $\norm{Z\mu}_{C^{1/2}} \le
  \norm{\mu}_{\mathcal{W}}$.
\end{lemma}

\begin{proof}
  By the Cauchy--Schwarz inequality, for each $x, y \in X$,
  \[
  \abs{Z\mu(x)} = \abs{\inprod{\mu}{\delta_x}_{\mathcal{W}}}
  \le \norm{\mu}_{\mathcal{W}} \norm{\delta_x}_{\mathcal{W}} = \norm{\mu}_{\mathcal{W}}
  \]
  and
  \[
  \abs{Z\mu(x) - Z\mu(y)} = \abs{\inprod{\mu}{\delta_x - \delta_y}_{\mathcal{W}}}
  \le \norm{\mu}_{\mathcal{W}} \norm{\delta_x - \delta_y}_{\mathcal{W}}.
  \]
  Now
  \begin{align*}
  \norm{\delta_x - \delta_y}_{\mathcal{W}}^2 &= \norm{\delta_x}_{\mathcal{W}}^2
  - \inprod{\delta_x}{\delta_y}_{\mathcal{W}}
  - \inprod{\delta_y}{\delta_x}_{\mathcal{W}}
  + \norm{\delta_y}^2\\
  &= 2\bigl( 1 - e^{-d(x,y)} \bigr) \le 2 d(x,y),
  \end{align*}
  so $\norm{Z\mu}_{C^{1/2}} \le \norm{\mu}_{\mathcal{W}}$.
\end{proof}

\begin{prop}\label{T:W-bounded}
  The map $Z:FM \to C^{1/2}$ defined by \eqref{E:Z} extends uniquely
  to an injective linear operator $Z: \mathcal{W} \to C^{1/2}$ with
  $\norm{Z} = 1$.  Furthermore, for each $w \in \mathcal{W}$ and
  $\mu \in FM$,
  \begin{equation}\label{E:FM-W}
  \inprod{w}{\mu}_{\mathcal{W}} = \int (Z w) \ d\overline{\mu}.
  \end{equation}
\end{prop}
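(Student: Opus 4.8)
The plan is to construct the extension with the standard bounded-linear-transformation argument, pin down the operator norm using point masses, establish the pairing identity \eqref{E:FM-W} by a continuity-and-density argument, and then read off injectivity directly from that identity. I would begin by noting that $C^{1/2}$ is a Banach space: a sequence that is Cauchy in the given norm converges uniformly, and the uniform limit inherits the H\"older bound, so the limit lies in $C^{1/2}$ and the convergence holds in $\norm{\cdot}_{C^{1/2}}$. Lemma \ref{T:FM-bounded} shows that $Z \colon FM \to C^{1/2}$ is bounded with norm at most $1$ on the subspace $FM$, which is dense in $\mathcal{W}$ by definition of the completion. Hence $Z$ extends uniquely to a continuous linear operator $Z \colon \mathcal{W} \to C^{1/2}$ with $\norm{Z} \le 1$: for $w \in \mathcal{W}$ choose $\mu_n \in FM$ with $\mu_n \to w$, and set $Zw := \lim_n Z\mu_n$, the limit existing in the complete space $C^{1/2}$ and being independent of the approximating sequence because $Z$ is Lipschitz on $FM$.

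Next I would verify that $\norm{Z} = 1$ by testing on a single point mass. For $x \in X$ one computes $\norm{\delta_x}_{\mathcal{W}}^2 = \inprod{\delta_x}{\delta_x}_{\mathcal{W}} = e^{-d(x,x)} = 1$, while $Z\delta_x(x) = e^{-d(x,x)} = 1$ forces $\norm{Z\delta_x}_{C^{1/2}} \ge \norm{Z\delta_x}_\infty \ge 1$. Combined with the bound $\norm{Z\delta_x}_{C^{1/2}} \le \norm{\delta_x}_{\mathcal{W}} = 1$ supplied by Lemma \ref{T:FM-bounded}, the norm is attained, so $\norm{Z} = 1$.

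For the pairing identity, I would fix $\mu \in FM$ and compare the two maps $w \mapsto \inprod{w}{\mu}_{\mathcal{W}}$ and $w \mapsto \int (Zw) \, d\overline{\mu}$ on $\mathcal{W}$. The first is continuous, being an inner product against a fixed vector. The second is continuous as well: since $\mu$ is finitely supported, $\int (Zw) \, d\overline{\mu}$ is a finite linear combination of the evaluations $(Zw)(x_i)$ at the points $x_i \in \supp \mu$, and each such evaluation is controlled by $\norm{Zw}_\infty \le \norm{Zw}_{C^{1/2}} \le \norm{w}_{\mathcal{W}}$. The two continuous maps agree on the dense subspace $FM$ by \eqref{E:FM-FM}, so they agree on all of $\mathcal{W}$, which is exactly \eqref{E:FM-W}.

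Finally, injectivity drops out of \eqref{E:FM-W}: if $Zw = 0$, then $\inprod{w}{\mu}_{\mathcal{W}} = \int (Zw) \, d\overline{\mu} = 0$ for every $\mu \in FM$, and choosing $\mu_n \in FM$ with $\mu_n \to w$ gives $\norm{w}_{\mathcal{W}}^2 = \lim_n \inprod{w}{\mu_n}_{\mathcal{W}} = 0$, so $w = 0$. I expect the only genuinely non-formal point to be this injectivity statement, since it is what certifies that the abstract completion $\mathcal{W}$ is faithfully realized as a space of honest functions on $X$ rather than remaining a mere Hilbert space of Cauchy-sequence classes; everything else is the routine BLT extension together with a one-line check on point masses. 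Once the pairing identity is established, however, injectivity is immediate.
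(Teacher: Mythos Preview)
Your proof is correct and follows essentially the same approach as the paper: extend $Z$ by density using Lemma \ref{T:FM-bounded}, verify $\norm{Z}=1$ on point masses, obtain \eqref{E:FM-W} by continuity from \eqref{E:FM-FM}, and deduce injectivity from \eqref{E:FM-W} and the density of $FM$. The only cosmetic difference is that the paper phrases the injectivity step by first specializing \eqref{E:FM-W} to $\mu=\delta_x$ to get $Zw(x)=\inprod{w}{\delta_x}_{\mathcal{W}}$, whereas you invoke \eqref{E:FM-W} for general $\mu\in FM$ directly; the content is identical.
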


\begin{proof}
  Lemma \ref{T:FM-bounded} imply that $Z$ extends uniquely to a linear
  operator $Z:\mathcal{W} \to C^{1/2}$ with norm at most $1$, and
  \eqref{E:FM-FM} then implies \eqref{E:FM-W}. For each $x \in X$,
  $\norm{Z \delta_x}_\infty = 1 = \norm{\delta_x}_{\mathcal{W}}$, so
  $\norm{Z} = 1$.

  To prove injectivity, suppose that $Z w = 0$ for some $w \in
  \mathcal{W}$.  Then for each $x \in X$, by \eqref{E:FM-W},
  \[
  0 = Z w(x) = \inprod{w}{\delta_x}_{\mathcal{W}},
  \]
  from which it follows by linearity that
  $\inprod{w}{\mu}_{\mathcal{W}} = 0$ for each $\mu \in FM$.  Since
  $FM$ is dense in $\mathcal{W}$, this implies that $w = 0$.
\end{proof}

\begin{defn} \label{D:compact-magnitude}
  Let $A \subseteq X$ be compact.  A \dfn{weighting} for $A$ is an
  element $w \in \mathcal{W}_A$ such that for each $a \in A$, $Zw(a) =
  1$.  If $A$ possesses a weighting $w$, then the \dfn{magnitude} of $A$
  is
  \begin{equation} \label{E:compact-magnitude}
  \abs{A} := \norm{w}_{\mathcal{W}}^2.
  \end{equation}
  If $A$ does not possess a weighting, then $\abs{A} := \infty$.
\end{defn}

Since $Z$ is injective, if $A$ possesses a weighting, then the
weighting is unique.  If $A$ possesses a complex weighting $w$, then
the real part of $w$ (defined by extending the ``real part'' map on
$FM$ to $\mathcal{W}$) is also a weighting for $A$, and hence equal to
$w$; thus the existence of a weighting for $A$ and the magnitude of
$A$ are independent of the scalar field.  It is an open question
whether there exists a compact PDMS whose magnitude is infinite.

If $A$ is a compact metric space of negative type, then \dfn{magnitude
  function} of $A$ is defined as before to be the function $t \mapsto
\abs{tA}$ for $t > 0$.

We next compare Definition \ref{D:compact-magnitude} to the original
Definition \ref{D:finite-magnitude} of magnitude for a finite metric
space $A$.  The definitions of weighting are clearly equivalent when
$w \in \R^A$ is identified with $\sum_{a\in A} w(a) \delta_a \in
FM(A)$. As noted earlier, an arbitrary finite metric space may not
possess a weighting, but if $A$ is a finite PDMS then the matrix
$\zeta$ is positive definite, hence invertible, so $A$ possesses a
unique weighting $w \in \R^A$.

The equivalence of \eqref{E:finite-magnitude} and
\eqref{E:compact-magnitude} is less immediately obvious, so it is
desirable to motivate \eqref{E:compact-magnitude}.  If one thinks of
measures as dual to functions, then the right hand side of
\eqref{E:finite-magnitude} is interpreted as
$\inprod{1}{\overline{w}}$, where $1$ denotes the function with the
constant value $1$.  However, interesting function spaces on
noncompact domains generally do not contain constant functions, which
suggests that this interpretation is also unsatisfactory; one should
replace the constant $1$ with some canonical function which is equal
to $1$ everywhere on $A$. Fortunately, \eqref{E:zeta-w} provides such
a function, namely $\zeta w$. The definition
\eqref{E:finite-magnitude} can thus be rewritten as
\[
\abs{A} := \sum_{a, b \in A} w(a) \zeta(a,b) \overline{w(b)}.
\]
and then reinterpreted as
\[
\abs{A} := \int (\zeta w) \ d\overline{w} 
= \inprod{\zeta w}{\overline{w}},
\]
which is clearly equivalent to \eqref{E:compact-magnitude}.

The next result shows that Definition \ref{D:compact-magnitude} agrees
with the definition of magnitude adopted in \cite{Meckes} (cf.\
Theorem 2.4 in \cite{Meckes}), and motivates the definition of
$\abs{A} = \infty$ when $A$ does not possess a weighting.

\begin{thm} \label{T:agreement} If $A \subseteq X$ is compact then
  \begin{equation}\label{E:magnitude-sup}
    \abs{A} = \sup \Set{\frac{\abs{\mu(A)}^2}{\norm{\mu}_{\mathcal{W}}^2}}
    {\mu \in FM(A), \ \mu \neq 0}.
  \end{equation}
\end{thm}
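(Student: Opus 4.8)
The plan is to recognize the supremum in \eqref{E:magnitude-sup} as the square of the operator norm of a single linear functional, and to read off both the existence of a weighting and the value of the magnitude from whether that functional is bounded. Concretely, I would let $L \colon FM(A) \to \F$ be the linear functional $L(\mu) := \mu(A)$, where $FM(A)$ is regarded as a dense subspace of the Hilbert space $\mathcal{W}_A$. Then the right-hand side of \eqref{E:magnitude-sup} is exactly $\norm{L}^2$, the squared norm of $L$ with respect to $\norm{\cdot}_{\mathcal{W}}$, with the convention $\norm{L} = \infty$ when $L$ is unbounded. The theorem thus reduces to showing $\norm{L}^2 = \abs{A}$ both when $A$ possesses a weighting and when it does not.

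Suppose first that $A$ possesses a weighting $w \in \mathcal{W}_A$, so that $Zw \equiv 1$ on $A$. For any $\mu \in FM(A)$ we have $\supp \mu \subseteq A$, so \eqref{E:FM-W} gives
\[
\inprod{w}{\mu}_{\mathcal{W}} = \int (Zw) \, d\overline{\mu} = \int_A 1 \, d\overline{\mu} = \overline{\mu(A)},
\]
whence $\mu(A) = \inprod{\mu}{w}_{\mathcal{W}}$. The Cauchy--Schwarz inequality then yields $\abs{\mu(A)}^2 \le \norm{\mu}_{\mathcal{W}}^2 \norm{w}_{\mathcal{W}}^2 = \abs{A}\,\norm{\mu}_{\mathcal{W}}^2$, so $\norm{L}^2 \le \abs{A}$. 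For the reverse inequality I would use that $FM(A)$ is dense in $\mathcal{W}_A$ to pick $\mu_n \in FM(A)$ with $\mu_n \to w$; then $\norm{\mu_n}_{\mathcal{W}}^2 \to \norm{w}_{\mathcal{W}}^2 = \abs{A}$ and $\mu_n(A) = \inprod{\mu_n}{w}_{\mathcal{W}} \to \norm{w}_{\mathcal{W}}^2 = \abs{A}$, so the ratio $\abs{\mu_n(A)}^2/\norm{\mu_n}_{\mathcal{W}}^2$ tends to $\abs{A}$ (note $\norm{w}_{\mathcal{W}} > 0$ since $Zw \equiv 1 \neq 0$). This gives $\norm{L}^2 = \abs{A}$; equivalently, one may simply invoke that the norm of a bounded functional with representer $w$ equals $\norm{w}_{\mathcal{W}}$.

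It remains to handle the case where $A$ has no weighting, where the claim is that the supremum equals $\infty$. I would argue by contraposition: if the supremum is finite, then $L$ is bounded on the dense subspace $FM(A)$ of $\mathcal{W}_A$, hence extends uniquely to a bounded functional on $\mathcal{W}_A$. Since $\mathcal{W}_A$ is a Hilbert space, the Riesz representation theorem furnishes a unique $w \in \mathcal{W}_A$ with $L(\mu) = \inprod{\mu}{w}_{\mathcal{W}}$ for all $\mu \in FM(A)$. Taking $\mu = \delta_a$ with $a \in A$ gives $\inprod{\delta_a}{w}_{\mathcal{W}} = \delta_a(A) = 1$, so that $Zw(a) = \inprod{w}{\delta_a}_{\mathcal{W}} = \overline{1} = 1$; thus $w$ is a weighting for $A$, a contradiction. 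Hence the supremum is $\infty = \abs{A}$.

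The one point that demands care is the bookkeeping in this last step: one must match the sesquilinear convention of $\inprod{\cdot}{\cdot}_{\mathcal{W}}$ against the linearity of $L$ so that the Riesz representer $w$ genuinely satisfies $Zw \equiv 1$ on $A$ rather than its conjugate. Everything else is a routine combination of Cauchy--Schwarz, density of $FM(A)$, and completeness of $\mathcal{W}_A$; the real content is the clean reformulation of the weighting/magnitude pair as the representer/norm of the functional $\mu \mapsto \mu(A)$, which simultaneously handles the existence of a weighting and the definition $\abs{A} = \infty$ in the degenerate case.
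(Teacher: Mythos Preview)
Your proof is correct and follows essentially the same approach as the paper: both identify the supremum as the squared norm of the linear functional $\mu \mapsto \mu(A)$ on $FM(A) \subseteq \mathcal{W}_A$, use Cauchy--Schwarz against the weighting $w$ for one inequality, and invoke the Riesz representation theorem on $\mathcal{W}_A$ to produce a weighting (and read off its norm) for the other. The paper's organization is slightly slicker---it proves ``$\kappa < \infty \iff$ a weighting exists'' and extracts $\abs{A} = \kappa$ directly from the Riesz representer's norm, rather than splitting into cases---but the content is identical.
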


\begin{proof}
  Let $\kappa$ denote the supremum in \eqref{E:magnitude-sup}.  We
  need to show that $A$ possesses a weighting $w$ if and only if
  $\kappa < \infty$, and that in that case $\norm{w}_{\mathcal{W}}^2 =
  \kappa$.
  
  Suppose that $A$ possesses a weighting $w$.  Then for each $\mu \in
  FM(A)$, by \eqref{E:FM-W},
  \[
  \inprod{w}{\mu}_{\mathcal{W}} = \inprod{Zw}{\overline{\mu}} =
  \overline{\mu(A)},
  \]
  and so by the Cauchy--Schwarz inequality,
  \[
  \abs{\mu(A)}^2 = \abs{\inprod{w}{\mu}_{\mathcal{W}}}^2 \le
  \norm{\mu}_{\mathcal{W}}^2 \norm{w}_{\mathcal{W}}^2,
  \]
  which implies that $\kappa < \infty$.

  Now suppose that $\kappa < \infty$.  Then the linear functional $\mu
  \mapsto \mu(A)$ on $\bigl(FM(A), \norm{\cdot}_{\mathcal{W}}\bigr)$
  is bounded with norm $\sqrt{\kappa}$. It thus extends to a linear
  functional on $\mathcal{W}_A$ with norm $\sqrt{\kappa}$. By the
  Riesz representation theorem for bounded linear functionals on
  Hilbert spaces, there exists a $w \in \mathcal{W}_A$ such that
  $\inprod{\mu}{w}_{\mathcal{W}} = \mu(A)$ for each $\mu \in FM(A)$
  and $\norm{w}_{\mathcal{W}}^2 = \kappa$.  In particular, for each $a
  \in A$, by \eqref{E:FM-W},
  \[
  Zw(a) = \inprod{w}{\delta_a}_{\mathcal{W}}
  = \overline{\delta_a(A)} = 1.
  \]
  Thus $w$ is a weighting for $A$, and $\abs{A} =
  \norm{w}_{\mathcal{W}}^2 = \kappa$.
\end{proof}

Besides the agreement of Definition \ref{D:compact-magnitude} with the
original definition of magnitude for finite PDMSs, the results of
\cite[Section 2]{Meckes} support Definition \ref{D:compact-magnitude}
as the ``correct'' notion of magnitude for a compact PDMS.  As shown
in \cite[Theorem 2.6]{Meckes}, magnitude as defined here or in
\cite{Meckes} is lower semicontinuous on the class of compact PDMSs
equipped with the Gromov--Hausdorff topology. In fact, since
Theorem \ref{T:agreement} implies that
\[
\abs{A} = \sup \Set{\abs{B}}{B \subseteq A \text{ is finite}},
\]
magnitude as defined here is the maximal lower semicontinuous
extension of magnitude from the class of finite PDMSs to the class of
compact PDMSs.


\section{The reproducing kernel and potential function of a PDMS}
\label{S:potential}

For technical reasons, it is fruitful to shift the emphasis from the
weighting $w$ of a compact subset $A \subseteq X$ to the function $Zw:
X \to \F$, and thus from the weighting space $\mathcal{W}$ to the
function space $Z(\mathcal{W}) \subseteq C^{1/2}$, which we consider
next.  The results of the next two sections should convince the
skeptical reader that this additional layer of complexity is
worthwhile.

Define $\mathcal{H} := Z(\mathcal{W}) \subseteq C^{1/2}$, and for a
compact set $A \subseteq X$, define $\mathcal{H}_A :=
Z(\mathcal{W_A})$.  Equip $\mathcal{H}$ with the inner product
\[
\inprod{g}{h}_{\mathcal{H}} := \inprod{Z^{-1}g}{Z^{-1}h}_{\mathcal{W}},
\]
recalling that $Z$ is injective by Proposition \ref{T:W-bounded}.
Then $Z:\mathcal{W} \to \mathcal{H}$ is a surjective isometry, and
$\norm{h}_{C^{1/2}} \le \norm{h}_{\mathcal{H}}$ for each $h \in
\mathcal{H}$.  Furthermore, the Hilbert spaces $\mathcal{H}$ and
$\mathcal{W}$ act as duals to each other via the bilinear pairing
\begin{equation} \label{E:H-W}
\inprod{h}{w} = \inprod{Z^{-1} h}{\overline{w}}_\mathcal{W} = \inprod{h}{\overline{Z
  w}}_\mathcal{H}.
\end{equation}

By \eqref{E:FM-W}, \eqref{E:H-W} extends the standard pairing between
functions and measures.  In particular, for each $x \in X$,
\[
h(x) = \inprod{h}{\delta_x} = \inprod{h}{e^{-d(x, \cdot)}}_{\mathcal{H}}.
\]
That is, $\mathcal{H}$ is the reproducing kernel Hilbert space (RKHS)
on $X$ with the reproducing kernel $e^{-d(x,y)}$. (Readers unfamiliar
with RKHSs are referred to \cite{Aronszajn}.) We have chosen to define
$\mathcal{H}$ in terms of $\mathcal{W}$ since this more closely
parallels Leinster's original definition of the magnitude of a finite
metric space.  Alternatively, one could first define $\mathcal{H}$ to
be this RKHS, and then define $\mathcal{W}$ as the dual space of
$\mathcal{H}$. We will not make use of the theory of RKHSs in this
paper, opting instead for more self-contained arguments.

If a compact set $A \subseteq X$ possesses a weighting $w$, then the
function $h = Z w \in C^{1/2}$ is called the \dfn{potential function}
of $A$.  This name is motivated by the following (entirely
non-realistic) physical model, which amounts to a less picturesque
version of the penguin analogy discussed in
\cite{Willerton-heuristic}.  (Actually, it is related to the Yukawa
potential, but only in the one-dimensional case; see \cite[Section
6.23]{LiLo}.)

We posit a type of charge such that a unit charge at $x \in X$ creates
a potential at $y \in X$ of $e^{-d(x,y)}$.  A finite set of fixed
points $A \subseteq X$ represents a conductor which is connected to a
ground to and from which charge may flow freely, but which otherwise
does not interact with anything in the space $X$.  The conductor is
held at a uniform potential of $1$, and no other charge exists
anywhere in $X$.  Then the charge at each point of $A$ is given by the
weighting $w$ of $A$, and
\[
h(x) = \sum_{a\in A} e^{-d(a,x)} w(a)
\]
is indeed the potential at each point $x \in X$.  The magnitude
$\abs{A}$ is then the total charge on $A$. If $A \subseteq X$ is an
infinite compact set, its weighting $w$ represents a charge
distribution, although we stress that $w$ need not be given by a
function or even a measure on $A$; in general it will be some more
singular type of object.  When $X$ is Euclidean space, $w$ will in
fact turn out to be a distribution in the sense of Schwartz, as seen
in the next section.

We can now give a finiteness condition and a variational formula for
magnitude, dual to those in Theorem \ref{T:agreement}, in terms of
the RKHS $\mathcal{H}$.

\begin{thm} \label{T:inf-magnitude} Let $A \subseteq X$ be compact.
  Then $A$ possesses a weighting if and only if there exists a
  function $h \in \mathcal{H}$ such that $h \equiv 1$ on $A$.  In that
  case,
  \begin{equation}\label{E:inf-magnitude}
  \abs{A} = \inf \Set{ \norm{h}_{\mathcal{H}}^2}{h \in
    \mathcal{H} \text{ and } h \equiv 1 \text{ on } A},
  \end{equation}
  and the infimum is uniquely attained by the potential function of $A$.
\end{thm}

\begin{proof}
  If $A$ possesses a weighting $w$, then by definition the potential
  function $h = Zw$ of $A$ lies in the set appearing in the right hand
  side of \eqref{E:inf-magnitude}.  Moreover, $\abs{A} =
  \norm{w}_{\mathcal{W}}^2 = \norm{h}_{\mathcal{H}}^2$.

  On the other hand, if $h \in \mathcal{H}$ and $h \equiv 1$ on $A$,
  then for each $\mu \in FM(A)$,
  \[
  \abs{\mu(A)}^2 = \abs{\inprod{h}{\mu}}^2 
  \le \norm{h}_{\mathcal{H}}^2 \norm{\mu}_{\mathcal{W}}^2.
  \]
  By Theorem \ref{T:agreement}, $A$ possesses a weighting and
  $\abs{A} \le \norm{h}_{\mathcal{H}}^2$.  It furthermore follows that
  if $A$ possesses a weighting then \eqref{E:inf-magnitude} holds.

  Moreover, if the closed affine subspace
  \[
  \Set{h \in \mathcal{H}}{h \equiv 1 \text{ on } A}
  = \bigcap_{a \in A} \Set{h}{\inprod{h}{\delta_a} = 1}
  \]
  is nonempty then it contains a unique element of minimal norm, which
  must therefore be the potential function $Zw$ of $A$.
\end{proof}

As mentioned in Section \ref{S:weightings}, it is an open question
whether an arbitrary compact PDMS $A$ has finite magnitude.  By
Theorem \ref{T:inf-magnitude}, this is equivalent to asking whether
the RKHS $\mathcal{H}_A$ contains the constant functions on $A$.

The next two results are useful in computing magnitudes, as will be
seen in Section \ref{S:Euclidean}.

\begin{prop} \label{T:inprod-magnitude} Let $A \subseteq X$ be compact
  with weighting $w$.  Then \( \abs{A} = \inprod{h}{w} \) for any $h
  \in \mathcal{H}$ such that $h \equiv 1$ on $A$.
\end{prop}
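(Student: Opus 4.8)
The plan is to reduce the assertion to the single case $h = Zw$, where $Zw$ is the potential function of $A$, and then identify $\inprod{Zw}{w}$ with $\abs{A}$. The device that makes this work is that any admissible $h$ differs from $Zw$ by a function in $\mathcal{H}$ vanishing identically on $A$, and such a function pairs to zero against every element of $\mathcal{W}_A$.

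Concretely, I would first set $g := h - Zw$. Both $h$ and the potential function $Zw$ lie in $\mathcal{H}$ and are identically $1$ on $A$ (the latter by Definition \ref{D:compact-magnitude}, its existence being guaranteed since $A$ has a weighting), so $g \in \mathcal{H}$ with $g \equiv 0$ on $A$. For any finitely supported measure $\mu \in FM(A)$, the pairing $\inprod{g}{\mu}$ is the ordinary integral $\int g \ d\mu$, which vanishes because $\supp \mu \subseteq A$ and $g \equiv 0$ there.

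Next I would upgrade this from $FM(A)$ to all of $\mathcal{W}_A$ by continuity. The functional $v \mapsto \inprod{g}{v}$ is bounded on $\mathcal{W}$: by \eqref{E:H-W} and the Cauchy--Schwarz inequality in $\mathcal{H}$, $\abs{\inprod{g}{v}} \le \norm{g}_{\mathcal{H}} \norm{v}_{\mathcal{W}}$. Since $w \in \mathcal{W}_A$ is by definition a $\norm{\cdot}_{\mathcal{W}}$-limit of measures in $FM(A)$, and the functional vanishes on all of $FM(A)$, it vanishes at $w$ as well; that is, $\inprod{g}{w} = 0$, whence $\inprod{h}{w} = \inprod{Zw}{w}$.

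It remains to evaluate $\inprod{Zw}{w}$. Applying \eqref{E:H-W} with $h = Zw$ gives $\inprod{Zw}{w} = \inprod{Z^{-1}(Zw)}{\overline{w}}_{\mathcal{W}} = \inprod{w}{\overline{w}}_{\mathcal{W}}$, and since the weighting $w$ is real (as noted after Definition \ref{D:compact-magnitude}, the existence and value of a weighting are independent of the scalar field), we have $\overline{w} = w$, so this equals $\norm{w}_{\mathcal{W}}^2 = \abs{A}$. I do not anticipate a serious obstacle: the only point requiring care is that $w$ is an abstract element of the completion $\mathcal{W}_A$ rather than a genuine measure, so that $\inprod{h}{w}$ must be read as the extended pairing of \eqref{E:H-W}. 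The density of $FM(A)$ in $\mathcal{W}_A$ together with the boundedness of that pairing is precisely what licenses passing the identity $\inprod{g}{\mu} = 0$ to the limit, and this is the crux of the argument.
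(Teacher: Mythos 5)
Your proof is correct, but it is organized differently from the paper's. The paper handles an arbitrary admissible $h$ in one stroke: for $\mu \in FM(A)$ it observes that $\inprod{h}{\mu} = \mu(A) = \inprod{\mu}{w}_{\mathcal{W}}$ (both equalities holding because $h$ and $Zw$ are $\equiv 1$ on $A$), and then passes to the limit along a sequence in $FM(A)$ converging to $w$, obtaining $\inprod{h}{w} = \inprod{w}{w}_{\mathcal{W}} = \abs{A}$ with no decomposition and no appeal to the realness of $w$. You instead split $h = Zw + g$ with $g \equiv 0$ on $A$, prove $\inprod{g}{w} = 0$ by the same density-plus-boundedness mechanism, and compute $\inprod{Zw}{w}$ separately; this last step forces you to invoke the fact (noted after Definition \ref{D:compact-magnitude}) that $\overline{w} = w$ in order to identify $\inprod{w}{\overline{w}}_{\mathcal{W}}$ with $\norm{w}_{\mathcal{W}}^2$, an ingredient the paper's argument never needs. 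What your route buys: your orthogonality step is precisely the dual form of Corollary \ref{T:potential-orthogonal}, which the paper deduces only afterward as a consequence of the proposition, so you effectively prove corollary and proposition in the reverse order; your argument is also close in spirit to the paper's second, sketched proof, which obtains the same orthogonality from Theorem \ref{T:inf-magnitude} and the Hilbert-space geometry of minimal-norm elements of affine subspaces, whereas you verify it by hand. Both proofs rest on the same two pillars---the extended pairing \eqref{E:H-W}/\eqref{E:FM-W} and the density of $FM(A)$ in $\mathcal{W}_A$---so the difference is structural rather than conceptual.
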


\begin{proof}
  For any such $h$ and any $\mu \in FM(A)$,
  \[
  \inprod{h}{\mu} = \mu(A) = \inprod{\overline{Zw}}{\mu} =
  \inprod{\mu}{w}_{\mathcal{W}}.
  \]
  Let $\mu_n$ be a sequence in $FM(A)$ which converges to $w$ with
  respect to $\norm{\cdot}_{\mathcal{W}}$.  Then
  \[
  \inprod{h}{w} = \lim_{n\to \infty} \inprod{h}{\mu_n}
  = \lim_{n\to  \infty} \inprod{\mu_n}{w}_{\mathcal{W}} =
  \inprod{w}{w}_{\mathcal{W}} = \abs{A}.
  \]

  Alternatively, the proposition is equivalent to the claim that
  $\abs{A} = \inprod{h}{Zw}_{\mathcal{H}}$ for any $h \in \mathcal{H}$
  such that $h \equiv 1$ on $A$, where $Zw$ is the potential
  function of $A$.  In this form the statement follows from
  Theorem \ref{T:inf-magnitude} and basic Hilbert space geometry.
\end{proof}

\begin{cor} \label{T:potential-orthogonal} Let $A \subseteq X$ be
  compact with potential function $h$. Then
  $\inprod{h}{g}_{\mathcal{H}} = 0$ for any $g \in \mathcal{H}$ such
  that $g \equiv 0$ on $A$.
\end{cor}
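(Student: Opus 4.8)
The plan is to read the corollary as the standard orthogonality (first-order optimality) condition for the norm-minimization problem already solved in Theorem \ref{T:inf-magnitude}. There the potential function $h = Zw$ is characterized as the \emph{unique element of minimal $\mathcal{H}$-norm} among all $f \in \mathcal{H}$ with $f \equiv 1$ on $A$. Since the set of such $f$ is a coset of the linear subspace $\Set{g \in \mathcal{H}}{g \equiv 0 \text{ on } A}$, the minimizer ought to be orthogonal to that subspace, and this is precisely the assertion of the corollary. This is also exactly the Hilbert space fact invoked in the alternative proof of Proposition \ref{T:inprod-magnitude}, specialized to the difference directions of the affine constraint set.

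Concretely, I would fix $g \in \mathcal{H}$ with $g \equiv 0$ on $A$ and observe that for every scalar $t \in \F$ the function $h + tg$ still satisfies $h + tg \equiv 1$ on $A$. Hence, by the variational formula \eqref{E:inf-magnitude},
\[
\norm{h + tg}_{\mathcal{H}}^2 \ge \norm{h}_{\mathcal{H}}^2 = \abs{A}.
\]
Expanding the left-hand side and subtracting $\norm{h}_{\mathcal{H}}^2$ yields
\[
2\Re\bigl(\overline{t}\inprod{h}{g}_{\mathcal{H}}\bigr) + \abs{t}^2 \norm{g}_{\mathcal{H}}^2 \ge 0
\]
for all $t$. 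Restricting to real $t$ and considering the two one-sided limits as $t \to 0$ forces $\Re \inprod{h}{g}_{\mathcal{H}} = 0$.

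In the real case this already gives $\inprod{h}{g}_{\mathcal{H}} = 0$ and the proof is finished. In the complex case I would simply repeat the argument with $g$ replaced by $ig$ (which also vanishes on $A$), extracting $\Im \inprod{h}{g}_{\mathcal{H}} = 0$ and hence $\inprod{h}{g}_{\mathcal{H}} = 0$. There is essentially no obstacle here: the entire content is the optimality characterization supplied by Theorem \ref{T:inf-magnitude}, so the only point requiring a little care is the handling of complex scalars. Everything else is elementary Hilbert space geometry, and one need not even verify that $\Set{g}{g \equiv 0 \text{ on } A}$ is closed, since the perturbation $h + tg$ stays in the constraint set by construction.
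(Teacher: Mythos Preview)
Your argument is correct. It is the standard first-order optimality condition for the norm minimization in Theorem \ref{T:inf-magnitude}, and the handling of the complex case is fine.

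The paper's route is slightly different: it first proves Proposition \ref{T:inprod-magnitude}, which says that $\inprod{f}{h}_{\mathcal{H}} = \abs{A}$ for \emph{every} $f \in \mathcal{H}$ with $f \equiv 1$ on $A$ (here $h = Zw$ is the potential function), and then derives the corollary in one line by writing $\inprod{h}{g}_{\mathcal{H}} = \inprod{h}{g+h}_{\mathcal{H}} - \inprod{h}{h}_{\mathcal{H}} = \abs{A} - \abs{A} = 0$. Your proof bypasses Proposition \ref{T:inprod-magnitude} and goes straight from the variational characterization to the orthogonality. The underlying Hilbert space geometry is the same in both---indeed, the paper's alternative proof of Proposition \ref{T:inprod-magnitude} appeals to exactly this geometry---so the difference is only in the order of presentation: the paper packages the orthogonal-projection fact as Proposition \ref{T:inprod-magnitude} first and reads off the corollary, whereas you prove the corollary directly (from which Proposition \ref{T:inprod-magnitude} would in turn follow by writing any admissible $f$ as $h + g$).
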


\begin{proof}
  For any such $g$, $g+h \equiv 1$ on $A$, and so by Proposition
  \ref{T:inprod-magnitude},
  \[
  \inprod{h}{g}_{\mathcal{H}} = \inprod{h}{g+h}_{\mathcal{H}} -
  \inprod{h}{h}_{\mathcal{H}} = \abs{A} - \abs{A} = 0.
  \qedhere
  \]
\end{proof}

We consider next the role of measures which are not finitely
supported.  Let $M^{1/2}$ be the space of finite signed or complex
Borel measures $\mu \in M(X)$ such that $\int \sqrt{d(x,y)} \
d\abs{\mu}(y) < \infty$ for some (hence for any) $x \in X$.  It is
easy to verify that $h \mapsto \int h \ d\mu$ defines a bounded linear
functional on $C^{1/2}$, and therefore on $\mathcal{H}$.  Thus
measures in $M^{1/2}$ define elements of $\mathcal{W}$, and in
particular any $\mu \in M(A)$ defines an element of $\mathcal{W}_A$;
the map $Z$ acts on $\mu \in M^{1/2}$ according to \eqref{E:Z}. It is
not clear, however, whether distinct measures in $M^{1/2}$ necessarily
give rise to distinct elements of $\mathcal{W}$.  Two equivalent
formulations of this issue are that the form
$\inprod{\cdot}{\cdot}_{\mathcal{W}}$ may be degenerate on $M^{1/2}$,
and that $Z$ may not be injective on $M^{1/2}$.  This is why only
finitely supported measures were used in defining $\mathcal{W}$ in the
previous section.  However, this does imply that in
\eqref{E:magnitude-sup}, one can extend the supremum to all $\mu \in
M(A)$ such that $\norm{\mu}_{\mathcal{W}} \neq 0$; this leads to
another proof of the first part of \cite[Theorem 2.4]{Meckes}.

The above observation also lets us reproduce \cite[Theorem
2.3]{Meckes}, which shows that if $A$ possesses a \dfn{weight measure}
$\mu \in M(A)$---that is, $Z\mu (a) = \int e^{-d(a,b)} \ d\mu(b) = 1$
for each $a \in A$---then
\[
\abs{A} = \int (Z\mu) \ d\mu = \mu(A);
\]
in other words, the obvious analogue of the formula
\eqref{E:finite-magnitude} for the magnitude of a finite set does hold
in this case.  In \cite{Willerton-homog} weight measures are used to
define the magnitude of metric spaces which are not necessarily
positive definite (this will be discussed in more detail in Section
\ref{S:dimension} below); by \cite[Theorem 2.3]{Meckes} this coincides
with the present definition for any PDMS which possesses a weight
measure.  Although it is not clear how to define weightings or
magnitude for arbitrary metric spaces, it is clear that using weight
measures is insufficient since in general the weighting of a PDMS is
not given by a measure.  Nevertheless, it appears to be reasonable to
use this definition whenever a weight measure does exist.

We end this section by considering a quantity related to magnitude
which is in some ways better behaved.  For a compact (not necessarily
positive definite) metric space $A$, the \dfn{maximum diversity} of
$A$ is
\begin{equation} \label{E:diversity}
\abs{A}_+ = \sup_{\mu \in P(A)} \left( \int \int e^{-d(a,b)} \ d\mu(a)
  \ d\mu(b) \right)^{-1},
\end{equation}
where $P(A)$ denotes the space of Borel probability measures on $A$.
By renormalization, this is simply what one obtains by restricting the
supremum in \eqref{E:magnitude-sup} to positive measures; thus we
trivially have
\begin{equation} \label{E:diversity-magnitude}
 \abs{A}_+ \le \abs{A}
\end{equation}
for any compact PDMS $A$. The name stems from the following
interpretation of the quantity inside the supremum.  Suppose that the
points of a finite metric space $A$ represent all the species present
in an ecosystem, and $e^{-d(a,b)}\in (0,1]$ is viewed as the
similarity between species $a$ and $b$.  If $\mu \in P(A)$ describes the relative
abundances of the species, then
\begin{equation} \label{E:similarity}
\int \int e^{-d(a,b)} \ d\mu(a) \ d\mu(b)
\end{equation}
is the expected similarity of a pair of independently picked random
organisms. The reciprocal of \eqref{E:similarity} quantifies, in a
similarity-sensitive way, the diversity of the ecosystem; $\abs{A}_+$
is thus the maximum possible diversity of the given collection of
species when one considers all possible relative abundances. 

We remark that the reciprocal of \eqref{E:similarity} is just one of
an infinite family of diversity measures introduced in
\cite{Leinster-entropy,LeCo}. It is shown in \cite{Leinster-entropy}
that, under certain conditions, they all have the same maximum value.
See \cite{LeCo} for a thorough discussion of these diversity measures
in the context of theoretical ecology, and \cite{Leinster-entropy} for
a proof of a subtler relationship between maximum diversity and
magnitude.

Although maximum diversity lacks the category-theoretic motivation of
magnitude, it is in some ways more tractable than magnitude due to the
fact that it can be represented in terms of \emph{positive} measures.
One easy property which follows immediately from \eqref{E:diversity}
is that
\[
\abs{A}_+ \le \exp( \diam A),
\]
where $\diam A$ denotes the diameter of $A$. A subtler property is
that $\abs{A}_+$ is a continuous function of $A$ with respect to the
Gromov--Hausdorff topology (see \cite[Proposition 2.11]{Meckes}; this
result is stated for PDMSs but the proof applies to arbitrary compact
metric spaces).  Both of these properties fail in general for
magnitude, as witnessed by Example 2.2.8 in \cite{Leinster}, due to
Willerton, of a PDMS $A$ with six points such that $\lim_{t \to 0^+}
\abs{tA} = 6/5$.  Nevertheless, we will see in Section
\ref{S:capacity} below that magnitude and maximum diversity have a
deep relationship in Euclidean space. This will play a crucial role in
our investigation of the asymptotic growth of the magnitude function
in Section \ref{S:dimension}.

The maximum diversity of a compact PDMS can also be given a dual
characterization analogous to Theorem \ref{T:inf-magnitude}.

\begin{prop} \label{T:inf-diversity} Let $X$ be a PDMS and let $A
  \subseteq X$ be compact.  Then
  \begin{equation}\label{E:inf-capacity}
  \abs{A}_+ = \inf \Set{ \norm{h}_{\mathcal{H}}^2}{h \in
    \mathcal{H} \text{ and } h \ge 1 \text{ on } A}.
  \end{equation}
\end{prop}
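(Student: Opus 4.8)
The plan is to establish \eqref{E:inf-capacity} by showing both inequalities, using Theorem \ref{T:inf-magnitude} as the model but replacing the equality constraint $h \equiv 1$ with the inequality constraint $h \ge 1$, and relating the resulting optimization to the probability-measure formulation \eqref{E:diversity}. Write $\kappa_+$ for the infimum on the right-hand side of \eqref{E:inf-capacity}, and observe first that the feasible set $\Set{h \in \mathcal{H}}{h \ge 1 \text{ on } A}$ is a closed convex subset of the Hilbert space $\mathcal{H}$; if nonempty, it contains a unique element of minimal norm.

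For the inequality $\abs{A}_+ \le \kappa_+$, I would take any feasible $h \in \mathcal{H}$ with $h \ge 1$ on $A$ and any $\mu \in P(A)$. Since $\mu$ is a positive measure on $A$ and $h \ge 1$ there, the pairing \eqref{E:H-W} gives $1 = \mu(A) \le \int h \ d\mu = \inprod{h}{\mu}$, and then Cauchy--Schwarz together with $\inprod{\mu}{\mu}_{\mathcal{W}} = \int\int e^{-d(a,b)} \ d\mu(a) \ d\mu(b)$ yields $1 \le \inprod{h}{\mu}^2 \le \norm{h}_{\mathcal{H}}^2 \norm{\mu}_{\mathcal{W}}^2$. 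Rearranging gives $\left(\int\int e^{-d(a,b)} \ d\mu(a) \ d\mu(b)\right)^{-1} \le \norm{h}_{\mathcal{H}}^2$; taking the supremum over $\mu \in P(A)$ and the infimum over feasible $h$ delivers $\abs{A}_+ \le \kappa_+$. (Here one uses that measures in $P(A) \subseteq M(A) \subseteq M^{1/2}$ define elements of $\mathcal{W}_A$, as recorded in the discussion following Corollary \ref{T:potential-orthogonal}, so that the pairing and the identity $\norm{\mu}_{\mathcal{W}}^2 = \int\int e^{-d} \ d\mu \ d\mu$ both apply.)

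The reverse inequality $\kappa_+ \le \abs{A}_+$ is where I expect the real work to lie, and the natural approach is convex duality. One wants to solve $\inf\Set{\norm{h}_{\mathcal{H}}^2}{h \ge 1 \text{ on } A}$ and identify its value with $\abs{A}_+$. The minimizer $h_0$ is characterized by a variational (Kuhn--Tucker) condition: at the optimum there is a positive measure $\lambda$ supported on $\Set{a \in A}{h_0(a) = 1}$ representing the active constraints, so that $h_0$ is, up to normalization, the potential $Z\lambda$ of a measure $\lambda$ concentrated where $h_0 = 1$. Testing optimality against $h_0$ itself should force $\int h_0 \ d\lambda = \norm{h_0}_{\mathcal{H}}^2$, while feasibility and complementary slackness give $\int h_0 \ d\lambda = \lambda(A)$; normalizing $\lambda$ to a probability measure $\mu = \lambda/\lambda(A)$ then recovers the reciprocal of \eqref{E:similarity}, yielding a feasible $h$ of squared norm at most $\abs{A}_+$. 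The main obstacle is making this Lagrangian/complementary-slackness argument rigorous in the infinite-dimensional setting: one must either invoke a suitable convex-duality or minimax theorem to interchange an inf over $h$ with a sup over positive measures, or argue directly that the norm-minimizing $h_0$ over the closed convex feasible set has the stated representation as a potential of a positive measure supported on its contact set. I would handle this by projecting onto the closed convex cone, using the obtuse-angle characterization of the metric projection to produce the representing measure, and carefully checking that this measure lies in $M(A)$ (so that it induces an element of $\mathcal{W}_A$ and the pairings above are valid), completing the identification $\kappa_+ = \abs{A}_+$.
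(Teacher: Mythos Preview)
Your easy direction ($\abs{A}_+ \le \kappa_+$) via Cauchy--Schwarz is correct, though the paper does not isolate it: both inequalities fall out of a single minimax computation. For the hard direction, the paper takes precisely the first of the two routes you mention. It writes
\[
\abs{A}_+^{-1/2} = \inf_{\mu \in P(A)} \norm{\mu}_{\mathcal{W}}
= \inf_{\mu \in P(A)} \sup_{\norm{g}_{\mathcal{H}} \le 1} \int g \, d\mu,
\]
invokes an off-the-shelf minimax theorem (from \cite{AdHe}) to interchange the inf and sup, observes that $\inf_{\mu \in P(A)} \int g \, d\mu = \inf_{a \in A} g(a)$, and then rescales via $h = g/\inf_A g$ to convert the sup over the unit ball of $\mathcal{H}$ into the infimum over $\Set{h}{h \ge 1 \text{ on } A}$. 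No Lagrange multiplier is ever constructed.

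Your preferred projection/KKT route is viable in principle, but the step you yourself flag as ``the main obstacle'' is genuinely delicate. The variational inequality for the minimizer $h_0$ gives $\inprod{g}{Z^{-1} h_0} \ge 0$ for every $g \in \mathcal{H}$ with $g \ge 0$ on $A$, so $w_0 = Z^{-1} h_0 \in \mathcal{W}$ acts positively on that cone; but $\mathcal{W}$ is an abstract completion, and promoting $w_0$ to an honest positive Borel measure on $A$ (so that \eqref{E:diversity} applies) requires a separate Riesz-type argument---e.g., showing the functional extends continuously from $\mathcal{H}\vert_A$ to $C(A)$. The minimax route sidesteps this entirely, which is why the paper prefers it; your approach would work but buys extra structural information (the extremal measure and complementary slackness) at the cost of that additional representation step.
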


\begin{proof}
  For simplicity of exposition we assume in this proof that $\F = \R$;
  the complex case then follows since $\norm{\Re h}_{\mathcal{H}} \le
  \norm{h}_{\mathcal{H}}$.

  By \eqref{E:diversity} and the duality between $\mathcal{W}$ and
  $\mathcal{H}$,
  \[
  \abs{A}_+^{-1/2} = \inf_{\mu \in P(A)} \norm{\mu}_{\mathcal{W}} =
  \inf_{\mu \in P(A)} \sup_{\substack{g \in \mathcal{H} \\
      \norm{g}_{\mathcal{H}}\le 1}} \int g \ d\mu.
  \]
  By a general minimax theorem (see e.g.\ \cite[Theorem 2.4.1]{AdHe}),
  this is equal to
  \[
  \sup_{\substack{g \in \mathcal{H} \\ \norm{g}_{\mathcal{H}}\le 1}}
  \inf_{\mu \in P(A)} \int g \ d\mu
  = \sup_{\substack{g \in \mathcal{H} \\ \norm{g}_{\mathcal{H}}\le 1}}
  \inf_{a \in A} g(a).
  \]
  Since $\mathcal{H}$ contains strictly positive functions, the
  supremum is unchanged if $g$ is assumed to be positive on $A$.
  Given a $g \in \mathcal{H}$ with $\inf_{a \in A} g(a) = c > 0$,
  define $h = \frac{1}{c} g$.  The mapping $g \mapsto h$ defines a
  bijection
  \[
  \Set{g \in \mathcal{H}}{\norm{g}_{\mathcal{H}} \le 1 \text{ and } g
    > 0 \text{ on } A}
  \to
  \Set{h \in \mathcal{H}}{h \ge 1 \text{ on } A},
  \]
  such that $\inf_{a \in A} g(a) = c = \norm{h}_{\mathcal{H}}^{-1}$.
  It follows that
  \[
  \abs{A}_+^{-1/2} = \sup \Set{\norm{h}_{\mathcal{H}}^{-1}}{h \in
    \mathcal{H}
    \text{ and } h \ge 1 \text{ on } A}.
  \qedhere
  \]
\end{proof}


\section{Magnitude in Euclidean space}
\label{S:Euclidean}

We now specialize to the case in which $X = \R^n$, equipped with the
standard inner product $\inprod{\cdot}{\cdot}$ and the associated norm
$\norm{\cdot}$ and metric $d$.  Some of the results of this section
generalize to certain other normed or quasinormed spaces (cf.\
Sections 3 and 4 of \cite{Meckes}), but here we restrict to the
Euclidean case, which is of most central interest and about which the
most can be said.

The first task is to observe that the weighting space $\mathcal{W}$
and the RKHS $\mathcal{H}$ turn out to be well-known Sobolev-type
spaces of distributions and functions, respectively.  Define $F:\R^n
\to \R$ by $F(x) := e^{-\norm{x}}$.  Then the Fourier transform of $F$
is
\begin{equation} \label{E:F-FT}
\widehat{F}(x) = \frac{1}{(2\pi)^{n/2}}
 \int F(y) e^{-i\inprod{x}{y}} \ dy 
 = \frac{n! \omega_n}{(2\pi)^{n/2}} \bigl(1 + \norm{x}^2 \bigr)^{-(n+1)/2},
\end{equation}
where $\omega_n = \pi^{n/2} /\Gamma\bigl(\frac{n}{2} + 1\bigr)$ is the
volume of the unit ball in $\R^n$ \cite[Theorem 1.14]{StWe}.  In this
setting the map $Z:FM \to C^{1/2}$ is the convolution operator $\mu
\mapsto F * \mu$, and therefore
\begin{equation} \label{E:Z-FT} 
  \widehat{Z\mu}(x) = (2\pi)^{n/2}
  \widehat{F}(x) \widehat{\mu}(x) = n! \omega_n \bigl(1 +
  \norm{x}^2 \bigr)^{-(n+1)/2} \widehat{\mu}(x).
\end{equation}

For $\alpha \in \R$, the \dfn{Bessel potential space} $H^\alpha =
H^\alpha(\R^n)$ is the Hilbert space of tempered distributions
\[
H^\alpha := \Set{ \varphi \in \mathcal{S}'(\R^n)}{\bigl(1 +
  \norm{\cdot}^2\bigr)^{\alpha/2} \widehat{\varphi} \in L^2(\R^n)}
\]
equipped with the norm
\[
\norm{\varphi}_{H^\alpha} := \sqrt{\int_{\R^n} \bigl(1 + \norm{x}^2\bigr)^\alpha
  \abs{\widehat{\varphi(x)}}^2 \ dx}.
\]
(See e.g.\ \cite[Section 7.9]{Hoermander1}; our normalization for the
Fourier transform---identified in \eqref{E:F-FT}---differs from that
of \cite{Hoermander1}, but the $H^\alpha$ norms agree.)  When $\alpha
\ge 0$, $H^\alpha \subseteq L^2$, and so $H^\alpha$ is actually a
space of functions.  For each $\alpha > 0$, the space $\mathcal{S}$ of
Schwartz functions is dense in $H^\alpha$, and $H^\alpha$ and
$H^{-\alpha}$ act as duals via the unique extension of the bilinear
pairing
\[
\inprod{f}{\varphi} = \varphi(f)
\]
for $f \in \mathcal{S} \subseteq H^\alpha$ and $\varphi \in H^{-\alpha}
\subseteq \mathcal{S}'$.

The following results show that when $X = \R^n$, the weighting space
$\mathcal{W}$ may be identified with the space of distributions
$H^{-(n+1)/2}$ and the RKHS $\mathcal{H}$ is the function space
$H^{(n+1)/2}$.  Thus, the potential function of a compact set $A
\subseteq \R^n$ is in fact a so-called Bessel potential.

\begin{prop}\label{T:W-H-s}
  The inclusion map $FM \hookrightarrow H^{-(n+1)/2}$ extends to a
  bijection $\mathcal{W} \to H^{-(n+1)/2}$ such that
  $\norm{\mu}_{\mathcal{W}} = \sqrt{n! \omega_n}
  \norm{\mu}_{H^{-(n+1)/2}}$ for each $\mu \in FM$.
\end{prop}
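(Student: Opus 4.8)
The plan is to verify the claimed norm identity directly on $FM$ by a Fourier computation, and then to upgrade it to a bijection by proving that $FM$ is dense in $H^{-(n+1)/2}$. For the first step, fix $\mu \in FM$. By \eqref{E:FM-FM}, $\norm{\mu}_{\mathcal{W}}^2 = \int (Z\mu)\,d\overline{\mu}$, and since $Z\mu = F * \mu$ is a finite sum of translates of $F = e^{-\norm{\cdot}}$ it lies in $L^1 \cap L^2$; Parseval's formula therefore gives
\[
\norm{\mu}_{\mathcal{W}}^2 = \int \widehat{Z\mu}(x)\,\overline{\widehat{\mu}(x)}\,dx.
\]
Substituting \eqref{E:Z-FT} and recalling that $\norm{\mu}_{H^{-(n+1)/2}}^2 = \int (1+\norm{x}^2)^{-(n+1)/2}\abs{\widehat{\mu}(x)}^2\,dx$ yields
\[
\norm{\mu}_{\mathcal{W}}^2 = n!\,\omega_n \int (1+\norm{x}^2)^{-(n+1)/2}\abs{\widehat{\mu}(x)}^2\,dx = n!\,\omega_n \norm{\mu}_{H^{-(n+1)/2}}^2 .
\]
(If one prefers to avoid pairing a function against a measure, the same identity follows by applying Fourier inversion to $F$ inside the finite double sum $\sum_{j,k} c_j \overline{c_k}\, e^{-\norm{x_j-x_k}}$ and inserting \eqref{E:F-FT}.) Consequently the inclusion $FM \hookrightarrow H^{-(n+1)/2}$ is a scaled isometry, hence uniformly continuous, and so extends uniquely to a scaled isometry from the completion $\mathcal{W}$ onto the closure of $FM$ in the complete space $H^{-(n+1)/2}$.

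It then remains to show that $FM = \vspan\Set{\delta_x}{x \in \R^n}$ is dense in $H^{-(n+1)/2}$, which I expect to be the crux of the argument. I would argue by duality. As recorded earlier in this section, $H^{-(n+1)/2}$ and $H^{(n+1)/2}$ act as duals via the extension of the standard pairing, so by Hahn--Banach it suffices to show that any $f \in H^{(n+1)/2}$ annihilating every $\delta_x$ must vanish. Since $(n+1)/2 > n/2$, the Sobolev embedding theorem places $H^{(n+1)/2}$ inside the continuous (indeed $C^{1/2}$) functions, consistent with the inclusion $\mathcal{H} \subseteq C^{1/2}$ noted above; in particular point evaluation $f \mapsto \inprod{f}{\delta_x} = f(x)$ is a legitimate bounded functional. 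Thus $\inprod{f}{\delta_x} = 0$ for all $x$ forces $f(x) = 0$ for all $x \in \R^n$, whence $f \equiv 0$ by continuity. This proves density, so the closure of $FM$ is all of $H^{-(n+1)/2}$ and the scaled isometry constructed above is onto, i.e.\ a bijection.

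The only genuine subtlety is the density step, and it reduces precisely to the fact that the functions in the dual space $H^{(n+1)/2}$ are continuous, so that point evaluations are well defined and separate them; it is reassuring that this is exactly the $C^{1/2}$ regularity already established for $\mathcal{H}$. Everything else is the routine Fourier-analytic computation above together with the standard principle that a (scaled) isometry defined on a dense subspace extends uniquely to the completion and maps onto the closure of its image.
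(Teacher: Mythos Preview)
Your proof is correct and follows essentially the same approach as the paper: the norm identity via Fourier inversion (your parenthetical alternative is exactly the paper's computation), and the density of $FM$ in $H^{-(n+1)/2}$ via duality with $H^{(n+1)/2}$ and the observation that functions in $H^{(n+1)/2}$ are continuous so that point evaluations separate them. The only cosmetic difference is that the paper first checks explicitly that $\delta_x \in H^{-(n+1)/2}$ before computing, whereas you let that fall out of the computation.
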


\begin{proof}
  For each $x \in \R^n$, $\bigl\lvert\widehat{\delta_x}\bigr\rvert \equiv
  (2\pi)^{-n/2}$.  Since
  \[
  \int_{\R^n} \bigl(1 + \norm{x}^2\bigr)^{-(n+1)/2} \ dx < \infty, 
  \]
  as can be seen by integrating in polar coordinates, it follows that
  $\delta_x \in H^{-(n+1)/2}$, and therefore that $FM \subseteq
  H^{-(n+1)/2}$.

  Now let $\mu = \sum_{j=1}^n c_j \delta_{x_j} \in FM$.  By the
  Fourier inversion theorem,
  \begin{align*}
    n! \omega_n \norm{\mu}_{H^{-(n+1)/2}}^2 
    &= (2\pi)^{n/2} \int_{\R^n} \widehat{F}
    \abs{\widehat{\mu}}^2 \\
    & = (2\pi)^{n/2} \int_{\R^n} \widehat{F}(y)
    \frac{1}{(2\pi)^n} \sum_{j,k=1}^n c_j \overline{c_k} e^{-i\inprod{y}{x_j - x_k}} \ dy \\
    &= \sum_{j,k=1}^n c_j \overline{c_k} \frac{1}{(2\pi)^{n/2}}
    \int_{\R^n} \widehat{F}(y) e^{i
      \inprod{y}{x_k-x_j}} \ dy \\
    &= \sum_{j,k=1}^n c_j \overline{c_k} F(x_k - x_j) =
    \norm{\mu}_{\mathcal{W}}^2.
  \end{align*}
  Thus the inclusion $FM \hookrightarrow H^{-(n+1)/2}$ extends to a
  map $\mathcal{W} \to H^{-(n+1)/2}$ with the stated identity between
  norms, and which is therefore injective.

  To show that this map is surjective, we need to show that $FM$ is
  dense in $H^{-(n+1)/2}$.  Equivalently, we need to show that any
  bounded linear functional on $H^{-(n+1)/2}$ which vanishes on $FM$
  is zero.  Each bounded linear functional on $H^{-(n+1)/2}$ is
  represented by some $f \in H^{(n+1)/2}$.  The evaluation of that
  linear functional on the point mass $\delta_x$ is $f(x)$; thus the
  vanishing of $f$ on $FM$ implies that $f = 0$.
\end{proof}

\begin{cor}\label{T:H-Hs}
  The map $Z:FM \to C^{1/2}$ extends to a bijection $Z:\mathcal{W} \to
  H^{(n+1)/2}$ such that $\norm{Z \mu}_{H^{(n+1)/2}} = \sqrt{n!
    \omega_n} \norm{\mu}_{\mathcal{W}}$ for each $\mu \in FM$.  Thus
  $\mathcal{H} = H^{(n+1)/2}$ and $\norm{h}_{H^{(n+1)/2}} = \sqrt{n!
    \omega_n} \norm{h}_{\mathcal{H}}$ for each $h \in H^{(n+1)/2}$.
\end{cor}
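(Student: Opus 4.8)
The plan is to deduce everything from the Fourier-side description \eqref{E:Z-FT} of $Z$ together with the identification of $\mathcal{W}$ with $H^{-(n+1)/2}$ established in Proposition \ref{T:W-H-s}. First I would verify the norm identity on $FM$ by direct computation. For $\mu \in FM$, substituting \eqref{E:Z-FT} into the definition of the $H^{(n+1)/2}$ norm gives
\[
\norm{Z\mu}_{H^{(n+1)/2}}^2 = \int_{\R^n} \bigl(1 + \norm{x}^2\bigr)^{(n+1)/2} (n!\omega_n)^2 \bigl(1 + \norm{x}^2\bigr)^{-(n+1)} \abs{\widehat{\mu}(x)}^2 \ dx = (n!\omega_n)^2 \norm{\mu}_{H^{-(n+1)/2}}^2,
\]
since the powers of $\bigl(1 + \norm{x}^2\bigr)$ combine to the exponent $-(n+1)/2$ appearing in the $H^{-(n+1)/2}$ norm. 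Applying Proposition \ref{T:W-H-s} to replace $\norm{\mu}_{H^{-(n+1)/2}}^2$ by $(n!\omega_n)^{-1}\norm{\mu}_{\mathcal{W}}^2$ then yields $\norm{Z\mu}_{H^{(n+1)/2}}^2 = n!\omega_n \norm{\mu}_{\mathcal{W}}^2$, which is the claimed relation on $FM$.

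Next I would extend this relation to all of $\mathcal{W}$. Since $FM$ is dense in $\mathcal{W}$ and $H^{(n+1)/2}$ is complete, the displayed identity shows that $Z|_{FM}$ is, up to the factor $\sqrt{n!\omega_n}$, an isometry into $H^{(n+1)/2}$, and hence extends uniquely to a map $\mathcal{W} \to H^{(n+1)/2}$ with $\norm{Zw}_{H^{(n+1)/2}} = \sqrt{n!\omega_n}\,\norm{w}_{\mathcal{W}}$ for every $w \in \mathcal{W}$; this extension coincides with the operator $Z$ of Proposition \ref{T:W-bounded} by uniqueness. In particular $Z$ is injective, and $\mathcal{H} = Z(\mathcal{W})$ is a closed subspace of $H^{(n+1)/2}$ (being a scaled-isometric, hence complete, image of the complete space $\mathcal{W}$) on which $\norm{h}_{H^{(n+1)/2}} = \sqrt{n!\omega_n}\,\norm{h}_{\mathcal{H}}$.

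It remains to prove surjectivity, i.e.\ $Z(\mathcal{W}) = H^{(n+1)/2}$, and this is the only step requiring real care. Under the identification $\mathcal{W} = H^{-(n+1)/2}$, formula \eqref{E:Z-FT} says that $Z$ acts on the Fourier side as multiplication by $m(x) := n!\omega_n\bigl(1 + \norm{x}^2\bigr)^{-(n+1)/2}$. Given any $g \in H^{(n+1)/2}$, I would define $w$ by $\widehat{w} := m^{-1}\widehat{g}$, where $m^{-1}(x) = (n!\omega_n)^{-1}\bigl(1 + \norm{x}^2\bigr)^{(n+1)/2}$; a weight-bookkeeping computation identical to the one above gives $\norm{w}_{H^{-(n+1)/2}} = (n!\omega_n)^{-1}\norm{g}_{H^{(n+1)/2}} < \infty$, so that $w \in H^{-(n+1)/2} = \mathcal{W}$, whereupon $\widehat{Zw} = m\,\widehat{w} = \widehat{g}$ forces $Zw = g$. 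Thus $Z$ maps $\mathcal{W}$ bijectively onto $H^{(n+1)/2}$, whence $\mathcal{H} = H^{(n+1)/2}$ with the asserted norm relation. The genuine obstacle here is justifying that the multiplier identity \eqref{E:Z-FT}, stated only for finitely supported measures, persists for the general $w$ just constructed. This holds because both $w \mapsto Zw$ and the multiplier map $w \mapsto \mathcal{F}^{-1}\bigl(m\,\widehat{w}\bigr)$ are continuous from $\mathcal{W} = H^{-(n+1)/2}$ into $H^{(n+1)/2}$ — the former by the norm identity already established, the latter by the same weight computation — and they agree on the dense subspace $FM$, hence everywhere.
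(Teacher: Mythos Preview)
Your proof is correct and follows essentially the same route as the paper: both arguments combine Proposition~\ref{T:W-H-s} with the Fourier-side identity \eqref{E:Z-FT} and the observation that the multiplier $\bigl(1+\norm{\cdot}^2\bigr)^{-(n+1)/2}$ induces a bijective isometry $H^{-(n+1)/2}\to H^{(n+1)/2}$. The paper simply compresses your norm computation, density extension, and surjectivity step into a one-line appeal to that multiplier isometry, whereas you spell each of these out explicitly.
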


\begin{proof}
  This follows from Proposition \ref{T:W-H-s}, formula \eqref{E:Z-FT},
  and the fact that
  \[
  \varphi \mapsto \mathcal{F}^{-1} \Bigl(\bigl(1 + \norm{\cdot}^2\bigr)^{-(n+1)/2}
  \widehat{\varphi}\Bigr)
  \]
  is an isometry of $H^{-(n+1)/2}$ onto $H^{(n+1)/2}$, where
  $\mathcal{F}$ denotes the Fourier transform on $\mathcal{S}'$, which
  follows immediately from the definition of the spaces $H^\alpha$.
\end{proof}

Alternatively, in light of the comments in the previous section,
Corollary \ref{T:H-Hs} amounts to the known (but not easy to find
explicitly stated) fact that $H^{(n+1)/2}$ is an RKHS with reproducing
kernel $\frac{1}{n! \omega_n} e^{-\norm{x-y}}$.  (Note that $H^\alpha$
is only an RKHS when $\alpha > n/2$, and that there is no simple
explicit formula for the reproducing kernel for most values of
$\alpha$.)  Proposition \ref{T:W-H-s} then follows from the duality
between $H^{(n+1)/2}$ and $H^{-(n+1)/2}$.

If follows from Corollary \ref{T:H-Hs} that the general fact that
$\norm{h}_{C^{1/2}} \le \norm{h}_{\mathcal{H}}$ for $h \in
\mathcal{H}$ is, in the Euclidean setting, a special case of the
Sobolev embedding theorem.

\begin{cor}\label{T:magnitude-capacity}
  Let $A \subseteq \R^n$ be compact.  Then $A$ possesses a weighting
  in $H^{-(n+1)/2}$, and
  \[
  \abs{A} = \frac{1}{n! \omega_n} \inf \bigl\{ \norm{h}_{H^{(n+1)/2}}^2 \mid h
  \in H^{(n+1)/2} \text{ and } h \equiv 1 \text{ on } A \bigr\}.
  \]
\end{cor}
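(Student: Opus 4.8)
The plan is to read the statement off directly from Theorem \ref{T:inf-magnitude}, combined with the identification of the relevant Hilbert spaces established in Corollary \ref{T:H-Hs} and Proposition \ref{T:W-H-s}. By Corollary \ref{T:H-Hs}, $\mathcal{H}$ and $H^{(n+1)/2}$ coincide as sets of functions, with norms related by $\norm{h}_{H^{(n+1)/2}}^2 = n!\,\omega_n \norm{h}_{\mathcal{H}}^2$; by Proposition \ref{T:W-H-s}, $\mathcal{W} = H^{-(n+1)/2}$, so that any weighting of $A$, being an element of $\mathcal{W}_A \subseteq \mathcal{W}$, is automatically a distribution in $H^{-(n+1)/2}$. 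The corollary is thus essentially an assembly of these preceding results, and the only substantive point is to upgrade the conditional statement of Theorem \ref{T:inf-magnitude} into the unconditional assertion that $A$ possesses a weighting.

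First I would establish that $A$ possesses a weighting at all; by Theorem \ref{T:inf-magnitude} this is equivalent to exhibiting a single $h \in \mathcal{H} = H^{(n+1)/2}$ with $h \equiv 1$ on $A$. Here I would use compactness of $A$: choose $\varphi \in C_c^\infty(\R^n) \subseteq \mathcal{S}$ with $\varphi \equiv 1$ on a neighborhood of $A$, via a standard smooth bump construction. Since $\mathcal{S} \subseteq H^{(n+1)/2}$, this $\varphi$ lies in the admissible set, which is therefore nonempty, and Theorem \ref{T:inf-magnitude} then guarantees that $A$ has a weighting; in particular $\abs{A} < \infty$.

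With finiteness in hand, the formula is pure bookkeeping. Theorem \ref{T:inf-magnitude} gives
\[
\abs{A} = \inf \bigl\{ \norm{h}_{\mathcal{H}}^2 \mid h \in \mathcal{H} \text{ and } h \equiv 1 \text{ on } A \bigr\},
\]
and I would substitute $\norm{h}_{\mathcal{H}}^2 = \frac{1}{n!\,\omega_n}\norm{h}_{H^{(n+1)/2}}^2$ and note that the constraint set $\{h \in \mathcal{H} : h \equiv 1 \text{ on } A\}$ coincides with $\{h \in H^{(n+1)/2} : h \equiv 1 \text{ on } A\}$. The latter identification is legitimate because $\mathcal{H}$ and $H^{(n+1)/2}$ agree as sets, and the pointwise condition $h \equiv 1$ on $A$ is meaningful since $(n+1)/2 > n/2$ forces elements of $H^{(n+1)/2}$ to be continuous functions (indeed, $\norm{h}_{C^{1/2}} \le \norm{h}_{\mathcal{H}}$). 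Pulling the constant $\frac{1}{n!\,\omega_n}$ out of the infimum yields exactly the claimed identity.

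There is no genuinely hard step; the one point I would treat most carefully is the nonemptiness of the admissible set, since this is precisely what converts the conditional statement of Theorem \ref{T:inf-magnitude} into the unconditional conclusion that every compact $A \subseteq \R^n$ has a weighting. Compactness of $A$ is exactly the hypothesis that makes the bump-function construction available, and it is the only place in the argument where the Euclidean (rather than general PDMS) setting is used in an essential way.
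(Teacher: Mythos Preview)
Your proposal is correct and follows essentially the same approach as the paper's proof: exhibit a Schwartz function equal to $1$ on $A$ (using compactness) to verify the hypothesis of Theorem \ref{T:inf-magnitude}, then translate the resulting formula via the norm identity of Corollary \ref{T:H-Hs} and identify the weighting as an element of $H^{-(n+1)/2}$ via Proposition \ref{T:W-H-s}. Your write-up is somewhat more detailed, but the logical structure is identical.
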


\begin{proof}
  Any Schwartz function $h$ with $h \equiv 1$ on $A$ lies in
  $H^{(n+1)/2}$.  Theorem \ref{T:inf-magnitude} and Corollary
  \ref{T:H-Hs} then imply the stated formula for $\abs{A}$ and the
  existence of a weighting for $A$, which by Proposition \ref{T:W-H-s}
  may be regarded as an element of $H^{-(n+1)/2}$.
\end{proof}

Corollary \ref{T:magnitude-capacity} contains the fact that each
compact subset of $\R^n$ has finite magnitude, first proved in
\cite[Proposition 3.5.3]{Leinster}. However, unlike the upper bound on
magnitude in \cite[Lemma 3.5.2]{Leinster}, from which Proposition
\cite[Proposition 3.5.3]{Leinster} was deduced, the infimum expression
in Corollary \ref{T:magnitude-capacity} is sharp, and implies some
new, sharper bounds. In the next result, the upper bound on $\abs{tA}$
improves \cite[Lemma 3.5.4]{Leinster}, which implies a similar upper
bound with an additional constant factor (depending on $A$).  The
lower bound is new.

\begin{thm}\label{T:magnitude-function}
  Let $A \subseteq \R^n$ be compact and $t \ge 1$.  Then 
  \[
  \frac{\abs{A}}{t} \le \abs{tA} \le t^n \abs{A}.
  \]
\end{thm}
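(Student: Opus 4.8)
The plan is to apply Corollary \ref{T:magnitude-capacity} to both $A$ and $tA$, reducing each magnitude to an infimum of $H^{(n+1)/2}$-norms over functions equal to $1$ on the respective set, and then to relate the two infima by dilation. Writing $s = (n+1)/2$, the basic observation is that if $h \in H^s$ satisfies $h \equiv 1$ on $A$, then $g(x) := h(x/t)$ satisfies $g \equiv 1$ on $tA$ (since $g(ta) = h(a) = 1$), and conversely $g(x) \mapsto g(tx)$ sends functions equal to $1$ on $tA$ to functions equal to $1$ on $A$. So it suffices to track how $\norm{\cdot}_{H^s}^2$ behaves under such dilations.

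For the upper bound I would first compute the dilation rule for the Fourier transform. With the normalization of \eqref{E:F-FT}, the function $g(x) = h(x/t)$ has $\widehat{g}(\xi) = t^n \widehat{h}(t\xi)$, and after the substitution $\eta = t\xi$,
\[
\norm{g}_{H^s}^2 = t^n \int_{\R^n} \left(1 + \frac{\norm{\eta}^2}{t^2}\right)^s \abs{\widehat{h}(\eta)}^2 \, d\eta.
\]
For $t \ge 1$ we have $1 + \norm{\eta}^2/t^2 \le 1 + \norm{\eta}^2$, so the integrand is dominated termwise and $\norm{g}_{H^s}^2 \le t^n \norm{h}_{H^s}^2$. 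Since each such $g$ is admissible for $tA$, taking the infimum over all admissible $h$ for $A$ and invoking Corollary \ref{T:magnitude-capacity} gives $\abs{tA} \le t^n \abs{A}$.

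For the lower bound I would run the same computation in the other direction. Given $g \equiv 1$ on $tA$, set $h(x) = g(tx)$, so $h \equiv 1$ on $A$ and $\widehat{h}(\xi) = t^{-n}\widehat{g}(\xi/t)$; the analogous substitution yields
\[
\norm{h}_{H^s}^2 = t^{-n} \int_{\R^n} \bigl(1 + t^2 \norm{\eta}^2\bigr)^s \abs{\widehat{g}(\eta)}^2 \, d\eta.
\]
Here the relevant estimate for $t \ge 1$ is $1 + t^2\norm{\eta}^2 \le t^2(1 + \norm{\eta}^2)$, so $\bigl(1 + t^2\norm{\eta}^2\bigr)^s \le t^{2s}(1 + \norm{\eta}^2)^s = t^{n+1}(1 + \norm{\eta}^2)^s$ using $2s = n+1$. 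This gives $\norm{h}_{H^s}^2 \le t\,\norm{g}_{H^s}^2$, and taking infima as before produces $\abs{A} \le t\,\abs{tA}$, which is the claimed lower bound $\abs{A}/t \le \abs{tA}$.

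The argument is almost entirely routine once the reduction to Bessel norms is in place; the only points requiring care are getting the Fourier-scaling relations consistent with the paper's choice of normalization, and keeping track of the direction in which $t \ge 1$ is used. It is worth emphasizing that the two estimates on the weight $(1 + \|\cdot\|^2)^s$ exploit $t \ge 1$ in opposite ways, and it is precisely this asymmetry—together with the identity $2s = n+1$—that produces the two different powers $t^n$ and $t^{-1}$ in the final bounds. I do not anticipate a serious obstacle.
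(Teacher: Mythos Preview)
Your proposal is correct and follows essentially the same approach as the paper: both use Corollary \ref{T:magnitude-capacity} and compare $H^{(n+1)/2}$-norms under the dilation $h \mapsto h(\cdot/t)$, with the same pointwise inequalities on the Bessel weight. The only cosmetic differences are that the paper works directly with the potential function (the minimizer) rather than taking an infimum over all admissible $h$, and in the lower bound it factors the weight as $t^{-n}(1+t^2\|y\|^2)^{(n+1)/2} = t(t^{-2}+\|y\|^2)^{(n+1)/2}$ before applying $t^{-2}+\|y\|^2 \le 1+\|y\|^2$, which is algebraically equivalent to your estimate $1+t^2\|\eta\|^2 \le t^2(1+\|\eta\|^2)$.
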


\begin{proof}
  Let $h \in H^{(n+1)/2}$ be the potential function for $A$, and let
  $h_t (x) = h(x/t)$. By Corollary \ref{T:magnitude-capacity},
  \begin{align*}
  n! \omega_n \abs{tA} & \le \int_{\R^n} 
  \bigl(1 + \norm{x}^2\bigr)^{(n+1)/2} \abs{\widehat{h_t}(x)}^2 \ dx \\
  & = \int_{\R^n} \bigl(1 + \norm{x}^2\bigr)^{(n+1)/2} 
  \abs{t^{n} \widehat{h}(tx)}^2 \ dx \\
  & = t^{n} \int_{\R^n} \bigl(1 + t^{-2} \norm{y}^2\bigr)^{(n+1)/2} 
  \abs{\widehat{h}(y)}^2 \ dy \\
  & \le t^{n} \int_{\R^n} \bigl(1 + \norm{y}^2 \bigr)^{(n+1)/2}
  \abs{\widehat{h}(y)}^2 \ dy \\
  & = t^n n! \omega_n \abs{A}.
  \end{align*}

  Now let $g$ be the potential function for $tA$, and let $g^t (x) =
  g(tx)$.  By Corollary \ref{T:magnitude-capacity},
  \begin{align*}
  n! \omega_n \abs{A} & \le \int_{\R^n} 
  \bigl(1 + \norm{x}^2\bigr)^{(n+1)/2} \abs{\widehat{g^t}(x)}^2 \ dx \\
  & = \int_{\R^n} \bigl(1 + \norm{x}^2\bigr)^{(n+1)/2} 
  \abs{t^{-n} \widehat{g}(x/t)}^2 \ dx \\
  & = t^{-n} \int_{\R^n} \bigl(1 + t^2 \norm{y}^2\bigr)^{(n+1)/2} 
  \abs{\widehat{g}(y)}^2 \ dy \\
  & = t \int_{\R^n} \bigl(t^{-2} + \norm{y}^2 \bigr)^{(n+1)/2}
  \abs{\widehat{g}(y)}^2 \ dy \\
  & \le t \int_{\R^n} \bigl(1 + \norm{y}^2 \bigr)^{(n+1)/2}
  \abs{\widehat{g}(y)}^2 \ dy \\
  & = t n! \omega_n \abs{tA}.
  \qedhere
  \end{align*}
\end{proof}

It is an open question whether the magnitude function of a compact
subset of $\R^n$ (or more generally, a compact PDMS) must be
nondecreasing.  The lower bound in Theorem \ref{T:magnitude-function}
is one partial result in that direction; for another see the
discussion following Corollary \ref{T:magnitude-diversity} below.

\begin{cor} \label{T:magnitude-function-continuous} If $A \subseteq
  \R^n$ is compact, then the magnitude function of $A$ is continuous
  on $(0,\infty)$.
\end{cor}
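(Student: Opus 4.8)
The plan is to deduce continuity directly from the sharp two-sided bound in Theorem \ref{T:magnitude-function}, exploiting the scaling identity $sA = (s/t)(tA)$ to compare $\abs{sA}$ with $\abs{tA}$ for $s$ near $t$. The point is that Theorem \ref{T:magnitude-function} is stated for scaling parameters $\ge 1$, but it applies to \emph{every} compact set; by feeding it the rescaled set $tA$ (or $sA$) we can turn its one-sided estimate into a two-sided squeeze around $s = t$.

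Fix $t > 0$ and consider $s > 0$. First I would treat the case $s \ge t$. Writing $r = s/t \ge 1$ and applying Theorem \ref{T:magnitude-function} to the compact set $tA$ with scaling parameter $r$, one obtains
\[
\frac{\abs{tA}}{r} \le \abs{r \cdot tA} \le r^n \abs{tA}.
\]
Since $r \cdot tA = sA$, this reads
\[
\frac{t}{s}\abs{tA} \le \abs{sA} \le \left(\frac{s}{t}\right)^n \abs{tA}.
\]
For the case $s \le t$, I would instead set $r = t/s \ge 1$ and apply the same theorem to $sA$, giving $\abs{sA}/r \le \abs{tA} \le r^n \abs{sA}$, which rearranges to
\[
\left(\frac{s}{t}\right)^n \abs{tA} \le \abs{sA} \le \frac{t}{s}\abs{tA}.
\]

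In either case, as $s \to t$ the factors $t/s$ and $(s/t)^n$ both tend to $1$, so the lower and upper bounds on $\abs{sA}$ both converge to $\abs{tA}$. By the squeeze theorem, $\abs{sA} \to \abs{tA}$, establishing continuity at $t$; since $t > 0$ was arbitrary, the magnitude function is continuous on $(0,\infty)$.

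There is essentially no hard step here: the entire content is already packaged into the sharp scaling bounds of Theorem \ref{T:magnitude-function}, so the argument is a short squeeze. The only point requiring a little care is keeping the direction of the inequalities correct in the two cases $s \ge t$ and $s \le t$, since the roles of the lower and upper bounds switch between them.
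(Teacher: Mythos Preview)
Your proof is correct and is essentially the same as the paper's: both deduce continuity by a direct squeeze from the two-sided scaling bound of Theorem~\ref{T:magnitude-function}, applied to a rescaled copy of $A$. The only cosmetic difference is that the paper first reduces to continuity at $t=1$, whereas you work at a general $t$; the underlying manipulation is identical.
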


\begin{proof}
  It suffices by rescaling to prove continuity of $\abs{tA}$ at $t =
  1$.  Theorem \ref{T:magnitude-function} immediately implies that
  $\lim_{t \to 1^+} \abs{tA} = \abs{A}$, and upon replacing $t$ with
  $1/t$ it also implies that $\lim_{t \to 1^-} \abs{tA} = \abs{A}$.
\end{proof}

As mentioned in Section \ref{S:weightings}, it was proved in
\cite{Meckes} that magnitude is lower semicontinuous on the class of
compact PDMSs; this implies that the magnitude function of a compact
space of negative type is lower semicontinuous.  Proposition 2.2.6 of
\cite{Leinster} implies that the magnitude function of a finite space
of negative type is even analytic.  It is unknown whether the
magnitude function of a compact space of negative type is continuous
for $t > 0$ in general, although \cite[Example 2.2.8]{Leinster},
mentioned above in Section \ref{S:potential}, shows that the magnitude
function can fail to be continuous at $0$ if we define $0A$ to be a
one-point metric space.

Corollary \ref{T:magnitude-capacity} can also be used to give an easy
proof of the following special case of \cite[Theorem 3.5.6]{Leinster}.
(The methods of the present paper can also be used to prove
\cite[Theorem 3.5.6]{Leinster}, which treats an arbitrary positive
definite finite dimensional normed space.)

\begin{prop} \label{T:magnitude-volume}
  If $A \subseteq \R^n$ is compact, then $\abs{A} \ge
  \frac{\vol(A)}{n! \omega_n}$.
\end{prop}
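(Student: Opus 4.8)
The plan is to apply Corollary \ref{T:magnitude-capacity}, which writes
\[
\abs{A} = \frac{1}{n!\omega_n} \inf \bigl\{ \norm{h}_{H^{(n+1)/2}}^2 \mid h \in H^{(n+1)/2} \text{ and } h \equiv 1 \text{ on } A \bigr\}.
\]
It therefore suffices to show that $\norm{h}_{H^{(n+1)/2}}^2 \ge \vol(A)$ for every admissible $h$, since the desired inequality follows at once upon taking the infimum.

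Fix such an $h$. The key step is to discard the Bessel weight. Because $\bigl(1 + \norm{x}^2\bigr)^{(n+1)/2} \ge 1$ for every $x$, the definition of the $H^{(n+1)/2}$ norm gives
\[
\norm{h}_{H^{(n+1)/2}}^2 = \int_{\R^n} \bigl(1 + \norm{x}^2\bigr)^{(n+1)/2} \abs{\widehat{h}(x)}^2 \ dx \ge \int_{\R^n} \abs{\widehat{h}(x)}^2 \ dx.
\]
Since $(n+1)/2 > 0$ we have $H^{(n+1)/2} \subseteq L^2$, so $h$ is a genuine function, and Plancherel's theorem (with the unitary normalization of the Fourier transform used in \eqref{E:F-FT}) identifies the last integral with $\norm{h}_{L^2}^2$, with no constant. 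Finally, because $h \equiv 1$ on $A$---so that $\abs{h}^2 = 1$ there, whether the scalars are real or complex---restricting the integral to $A$ yields
\[
\norm{h}_{L^2}^2 \ge \int_A \abs{h(x)}^2 \ dx = \vol(A).
\]
Chaining these bounds gives $\norm{h}_{H^{(n+1)/2}}^2 \ge \vol(A)$ for every admissible $h$, and the claim follows.

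I do not anticipate any serious obstacle: the argument is just a weight-dropping estimate, Plancherel, and a trivial restriction of the domain of integration. The only point worth flagging is that the \emph{sharpness} of the infimum formula in Corollary \ref{T:magnitude-capacity} is exactly what makes this work, since we are bounding each competitor $h$ from below; an argument resting instead on an upper bound for magnitude (as in \cite[Lemma 3.5.2]{Leinster}) would not yield a lower bound of this form.
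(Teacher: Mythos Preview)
Your proof is correct and follows essentially the same approach as the paper: invoke Corollary \ref{T:magnitude-capacity}, drop the Bessel weight to bound $\norm{h}_{H^{(n+1)/2}}^2 \ge \|\widehat{h}\|_{L^2}^2$, apply Plancherel/Parseval to get $\|h\|_{L^2}^2$, and then restrict the integral to $A$. The paper's proof is just a more compressed version of yours.
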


\begin{proof}
  Suppose $h \in H^{(n+1)/2}$ and $h \equiv 1$ on $A$.  Then by Parseval's
  identity,
  \[
  \norm{h}_{H^{(n+1)/2}}^2 \ge \bigl\lVert \widehat{h} \bigr\rVert_{L^2}^2 =
  \norm{h}_{L^2}^2 \ge \vol(A),
  \]
  and the result follows from Corollary \ref{T:magnitude-capacity}.
\end{proof}

We next show that the potential function of a compact subset of
Euclidean space satisfies a certain pseudodifferential equation, which
reduces to a partial differential equation in odd dimensions.  This
equation amounts to the Euler--Lagrange equation for the variational
problem described by Corollary \ref{T:magnitude-capacity}.

\begin{prop}\label{T:PDE}
  Let $A \subseteq \R^n$ be compact with potential
  function $h$. Then \( (I-\Delta)^{(n+1)/2} h = 0 \) in the
  distributional sense on $\R^n \setminus A$.
\end{prop}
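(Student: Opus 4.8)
The plan is to translate the pseudodifferential equation into a support statement about the weighting $w$ of $A$, and then to derive that statement from the orthogonality relation in Corollary \ref{T:potential-orthogonal}. Recall that $h = Zw$, where by Proposition \ref{T:W-H-s} the weighting $w$ may be regarded as an element of $H^{-(n+1)/2} = \mathcal{W}$. The Fourier multiplier of $(I-\Delta)^{(n+1)/2}$ is $\bigl(1+\norm{\cdot}^2\bigr)^{(n+1)/2}$, which up to the constant $n!\omega_n$ is precisely the reciprocal of the multiplier appearing in \eqref{E:Z-FT}. Thus, on the Fourier side, $\bigl(1+\norm{\cdot}^2\bigr)^{(n+1)/2}\widehat{h} = n!\omega_n \widehat{w}$, so that
\[
(I-\Delta)^{(n+1)/2} h = n!\,\omega_n\, w
\]
as tempered distributions. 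The proposition is therefore equivalent to the assertion that $w$, viewed as a distribution, is supported on $A$.

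To prove $\supp w \subseteq A$, it suffices to show that $w(g) = 0$ for every $g \in C_c^\infty(\R^n \setminus A)$. Fix such a $g$. Since $g$ is a Schwartz function, $g \in H^{(n+1)/2} = \mathcal{H}$ by Corollary \ref{T:H-Hs}, and since $\supp g$ is disjoint from $A$ we have $g \equiv 0$ on $A$. Corollary \ref{T:potential-orthogonal} then gives $\inprod{h}{g}_{\mathcal{H}} = 0$. Working with real scalars, so that the potential function $h = Zw$ is real, the duality \eqref{E:H-W} identifies this inner product with the bilinear pairing between $\mathcal{H}$ and $\mathcal{W}$, and the symmetry of the real inner product yields
\[
\inprod{g}{w} = \inprod{g}{Zw}_{\mathcal{H}} = \inprod{g}{h}_{\mathcal{H}} = \inprod{h}{g}_{\mathcal{H}} = 0.
\]

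Finally I would observe that under the identifications $\mathcal{H} = H^{(n+1)/2}$ and $\mathcal{W} = H^{-(n+1)/2}$, the pairing $\inprod{\cdot}{\cdot}$ coincides with the $H^{(n+1)/2}$--$H^{-(n+1)/2}$ duality, i.e.\ $\inprod{g}{w} = w(g)$: both pairings extend the standard pairing between functions and measures, and they already agree on point masses since $\inprod{g}{\delta_x} = g(x) = \delta_x(g)$, hence they agree on the dense subspace $FM$ and therefore everywhere. Consequently $w(g) = 0$ for all $g \in C_c^\infty(\R^n \setminus A)$, which gives $\supp w \subseteq A$ and thus $(I-\Delta)^{(n+1)/2} h = 0$ on $\R^n \setminus A$.

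The one point requiring genuine care is precisely this last identification of the abstract $\mathcal{H}$--$\mathcal{W}$ pairing with the distributional pairing $w(g)$, including the absence of any spurious normalization constant (which is why one tracks the factor $n!\omega_n$ through the Fourier computation rather than through the pairing). Everything else is a direct application of the Fourier-analytic description of $Z$ in \eqref{E:Z-FT} together with the orthogonality relation of Corollary \ref{T:potential-orthogonal}, which is itself the Euler--Lagrange condition for the variational problem of Corollary \ref{T:magnitude-capacity}.
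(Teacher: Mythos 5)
Your proof is correct and is essentially the paper's own argument: both rest on Corollary \ref{T:potential-orthogonal} combined with the Fourier multiplier identity \eqref{E:Z-FT} (via Corollary \ref{T:H-Hs}). The only difference is organizational: you first establish $(I-\Delta)^{(n+1)/2}h = n!\,\omega_n\, w$ --- which is precisely Proposition \ref{T:h-w}, stated and proved separately in the paper immediately after this proposition --- and then show $\supp w \subseteq A$ by identifying the abstract $\mathcal{H}$--$\mathcal{W}$ pairing with the distributional one, whereas the paper reaches the same conclusion by pairing $(I-\Delta)^{(n+1)/2}h$ directly against a test function $g$ supported off $A$ and applying Parseval to recognize the result as a multiple of $\inprod{h}{g}_{\mathcal{H}} = 0$.
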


\begin{proof}
  Let $g:\R^n \to \R$ be a smooth function with compact support
  contained in $\R^n \setminus A$. By Corollaries
  \ref{T:potential-orthogonal} and \ref{T:H-Hs},
  \begin{align*}
    \inprod{(I - \Delta)^{(n+1)/2} h}{g}
    &= \inprod{\mathcal{F} \bigl( (I - \Delta)^{(n+1)/2}
      h\bigr)}{\overline{\widehat{g}}} 
    = \inprod{\bigl(1 + \norm{\cdot}^2\bigr)^{(n+1)/2}
      \widehat{h}}{\overline{\widehat{g}}} \\
    &= \inprod{h}{g}_{H^{(n+1)/2}} = 0.
    \qedhere
  \end{align*}
  \end{proof}

\begin{cor} \label{T:smooth-potential}
  Let $A \subseteq \R^n$ be compact.  Then the potential function for $A$
  is $C^\infty$ on $\R^n \setminus A$.
\end{cor}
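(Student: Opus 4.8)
The plan is to recognize Corollary~\ref{T:smooth-potential} as an elliptic regularity (hypoellipticity) statement for the operator appearing in Proposition~\ref{T:PDE}. That proposition says the potential function $h$ is a distributional solution of $(I-\Delta)^{(n+1)/2} h = 0$ on the open set $U := \R^n \setminus A$, and since the right-hand side $0$ is smooth, it suffices to prove that $(I-\Delta)^{(n+1)/2}$ is hypoelliptic, so that $h$ is forced to be $C^\infty$ there. This operator is the Fourier multiplier with symbol $(1 + \norm{\xi}^2)^{(n+1)/2}$, which is elliptic of order $n+1$, so its hypoellipticity is standard. The case of odd $n$ is cleanest: then $(n+1)/2 \in \N$, so $(I-\Delta)^{(n+1)/2}$ is a constant-coefficient elliptic differential operator and classical interior elliptic regularity (e.g.\ \cite{Hoermander1}) gives $h \in C^\infty(U)$ immediately.

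The step I expect to require the most care is even $n$, where $(I-\Delta)^{(n+1)/2}$ is genuinely nonlocal and one must appeal to hypoellipticity of elliptic \emph{pseudodifferential} operators rather than the elementary differential version. To keep the argument self-contained and to treat all $n$ uniformly, I would instead argue directly from the integral representation $h = Zw$, where $w$ is the weighting of $A$. The Fourier computation \eqref{E:Z-FT} gives $(I-\Delta)^{(n+1)/2} h = n!\,\omega_n\, w$ once $w$ is viewed as a distribution via Proposition~\ref{T:W-H-s}; combined with Proposition~\ref{T:PDE} this shows $\supp w \subseteq A$. (Equivalently, approximating $w$ by measures $\mu_k \in FM(A)$ and using that convergence in $\mathcal{W} \cong H^{-(n+1)/2}$ implies convergence in $\mathcal{S}'$, one sees directly that $w$ annihilates every test function supported off $A$.)

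With $w$ now a compactly supported distribution concentrated on $A$, fix $x_0 \in U$, a neighborhood $V \ni x_0$ with $\overline{V} \subseteq U$, and a cutoff $\chi \in C_c^\infty(\R^n)$ that is identically $1$ on a neighborhood of $A$ and vanishes on $V$. For $x \in V$ one then has $h(x) = \inprod{w}{\chi(\cdot)\, e^{-\norm{x-\cdot}}}$ in the $H^{-(n+1)/2}$--$H^{(n+1)/2}$ pairing; the cutoff is precisely what places the kernel in $H^{(n+1)/2}$, since $y \mapsto e^{-\norm{x-y}}$ is singular only at $y = x \notin \supp \chi$. Because $\supp \chi$ and $V$ are disjoint, the map $x \mapsto \chi(\cdot)\, e^{-\norm{x-\cdot}}$ is an infinitely differentiable curve from $V$ into $H^{(n+1)/2}$, its $x$-derivatives being the compactly supported smooth functions $\partial_x^\alpha\bigl(\chi(y)\, e^{-\norm{x-y}}\bigr)$. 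Since $w \in H^{-(n+1)/2}$ acts continuously, differentiating under the pairing yields $h \in C^\infty(V)$, and as $x_0 \in U$ was arbitrary, $h \in C^\infty(U)$. The one point needing genuine verification is this differentiation under the distributional pairing, which reduces to the routine (if slightly technical) claim that $x \mapsto \chi(\cdot)\, e^{-\norm{x-\cdot}}$ is smooth as an $H^{(n+1)/2}$-valued map, a consequence of the joint smoothness of the kernel once the diagonal is excised by $\chi$.
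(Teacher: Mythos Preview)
Your first paragraph is precisely the paper's proof: it cites ellipticity of $(I-\Delta)^{(n+1)/2}$ and invokes pseudodifferential elliptic regularity (Tr\`eves), with the odd-$n$ differential-operator case singled out as more elementary (Rudin).

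Your alternative route is correct and genuinely different. The paper treats the result as a one-line black-box corollary of hypoellipticity; you instead unpack it by showing $\supp w \subseteq A$ (either via Proposition~\ref{T:h-w} or by approximating $w$ in $\mathcal{W}_A$ by measures in $FM(A)$), then writing $h(x)=\inprod{w}{\chi(\cdot)\,e^{-\norm{x-\cdot}}}$ and differentiating under the pairing. This buys a self-contained argument that avoids the pseudodifferential machinery entirely and works uniformly in $n$. The paper's version is shorter and situates the fact within standard elliptic theory; yours is longer but makes explicit the mechanism---smoothness of the kernel off the diagonal combined with compact support of the weighting---and requires nothing beyond what the paper has already built.

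One small correction: you write that ``the cutoff is precisely what places the kernel in $H^{(n+1)/2}$.'' In fact $e^{-\norm{x-\cdot}} = Z\delta_x$ already lies in $\mathcal{H} = H^{(n+1)/2}$ without any cutoff; the reason the cutoff is essential is rather that $x \mapsto e^{-\norm{x-\cdot}}$ is \emph{not} differentiable as a map into $H^{(n+1)/2}$ (one extra power of $\abs{\xi}$ on the Fourier side destroys square-integrability), whereas $x \mapsto \chi(\cdot)\,e^{-\norm{x-\cdot}}$ is a smooth curve in $C_c^\infty \subset H^{(n+1)/2}$. This does not affect the validity of your argument, only the stated rationale.
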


\begin{proof}
  The pseudodifferential operator $(I-\Delta)^{(n+1)/2}$ is elliptic,
  and the corollary follows from classical elliptic regularity
  theory; see e.g.\ \cite[Corollary 4.5]{Treves}.  For a more
  elementary treatment in the case that $n$ is odd (so that
  $(I-\Delta)^{(n+1)/2}$ is actually a differential operator), see
  the corollary to Theorem 8.12 in \cite{Rudin}.
\end{proof}

\begin{prop} \label{T:h-w}
  Let $A \subseteq \R^n$ be compact with potential function $h$.  Then
  the weighting $w$ of $A$ is the distribution
  \(
  w = \frac{1}{n! \omega_n} (I - \Delta)^{(n+1)/2} h.
  \)
\end{prop}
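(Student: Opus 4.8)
The plan is to work entirely on the Fourier side, where the operator $(I-\Delta)^{(n+1)/2}$ is simply multiplication by $\bigl(1+\norm{\cdot}^2\bigr)^{(n+1)/2}$, and to recognize that this multiplier is precisely the (suitably normalized) inverse of the map $Z$. Since the potential function is by definition $h = Zw$, recovering $w$ amounts to inverting the relation \eqref{E:Z-FT}.

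Concretely, I would first recall from Corollary \ref{T:H-Hs} and formula \eqref{E:Z-FT} that, once extended from $FM$ to all of $\mathcal{W} = H^{-(n+1)/2}$ by continuity, the operator $Z$ acts by $\widehat{Zw} = n!\,\omega_n \bigl(1+\norm{\cdot}^2\bigr)^{-(n+1)/2} \widehat{w}$ as tempered distributions. Applying this to $h = Zw$ gives $\widehat{h} = n!\,\omega_n \bigl(1+\norm{\cdot}^2\bigr)^{-(n+1)/2} \widehat{w}$, and multiplying through by $\bigl(1+\norm{\cdot}^2\bigr)^{(n+1)/2}$ yields $\bigl(1+\norm{\cdot}^2\bigr)^{(n+1)/2} \widehat{h} = n!\,\omega_n\,\widehat{w}$. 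Since by definition the left-hand side is $\widehat{(I-\Delta)^{(n+1)/2}h}$---exactly the Fourier-multiplier interpretation already used in the proof of Proposition \ref{T:PDE}---taking inverse Fourier transforms gives $(I-\Delta)^{(n+1)/2}h = n!\,\omega_n\,w$, which is the claimed identity.

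The computation is essentially routine, so the only points requiring care are bookkeeping ones. First, $w$ lies in $H^{-(n+1)/2}\subseteq\mathcal{S}'$ and is in general neither a function nor a measure, so every equality must be read as an identity of tempered distributions; I would check that the weighted products above are well-defined, which follows since $h \in H^{(n+1)/2}$ gives $\bigl(1+\norm{\cdot}^2\bigr)^{(n+1)/4}\widehat{h}\in L^2$, so that $\bigl(1+\norm{\cdot}^2\bigr)^{(n+1)/2}\widehat{h}$ has finite $H^{-(n+1)/2}$ norm, consistent with $w\in\mathcal{W}$. Second, I would confirm that the extension of \eqref{E:Z-FT} from $FM$ to $\mathcal{W}$ is exactly the one underlying the isometry of Corollary \ref{T:H-Hs}, so that the normalization constant $n!\,\omega_n$ is correct. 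No genuine obstacle arises: the content of the proposition is entirely contained in the earlier identification of $\mathcal{W}$ and $\mathcal{H}$ as Bessel potential spaces, and the statement simply reads off the inverse of $Z$ as a (pseudo)differential operator.
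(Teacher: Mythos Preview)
Your proposal is correct and follows exactly the same approach as the paper: the paper's proof is a one-liner citing Corollary~\ref{T:H-Hs}, formula~\eqref{E:Z-FT}, and the relation $h = Zw$, which is precisely what you unpack on the Fourier side. Your additional remarks on why the distributional manipulations are justified are sound and go a bit beyond what the paper makes explicit, but the core argument is identical.
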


\begin{proof}
This follows from Corollary \ref{T:H-Hs}, \eqref{E:Z-FT}, and the fact
that $h = Zw$.
\end{proof}

As an application of the above results, we compute the potential
function, weighting, and magnitude of an interval $A = [0,\ell]
\subseteq \R$. Corollary \ref{T:smooth-potential} implies that the
potential function $h$ of $A$ is smooth on $(-\infty, 0)$ and on
$(\ell, \infty)$, and then Proposition \ref{T:PDE} shows that $h'' =
h$ (in the classical sense) on those intervals.  By definition, the
potential function $h$ is equal to $1$ on $A$, and is continuous by
Proposition \ref{T:W-bounded}.  Furthermore, since $h \in H^1(\R)$, it
decays to $0$ at $\pm \infty$ (see e.g.\ \cite[Corollary
7.9.4]{Hoermander1}). This boundary value problem has the unique
solution
\begin{equation}\label{E:h-interval}
h(x) = \begin{cases} e^x & \text{if } x < 0, \\
  1 & \text{if } 0 \le x \le \ell, \\
  e^{-x + \ell} & \text{if } x > \ell. \end{cases}
\end{equation}
By Proposition \ref{T:h-w}, the weighting of $A$ is the
distribution
\[
w = \frac{1}{2}(h - h'') = \frac{1}{2} \bigl(\lambda_A + \delta_0
+ \delta_\ell\bigr),
\]
where $\lambda_A$ denotes Lebesgue measure on the interval $A$.  Since
$w$ is a measure, Proposition \ref{T:inprod-magnitude} implies that
$\abs{A} = w(A) = 1 + \frac{\ell}{2}$.

The magnitude of an interval was first found in \cite[Theorem 7]{LW}
using approximation by finite sets, as justified by the results of
\cite{Meckes}; see also \cite[Theorem 3.2.2]{Leinster}.  The weight
measure of an interval was given, and proved to be a weight measure,
in \cite[Theorem 2]{Willerton-homog}, though it was not computed from
more basic data as above.  We note that Lemma 2.8 and Corollary 2.10
of \cite{Meckes} imply that the weighting of \emph{any} compact subset
of $\R$ is a measure, although numerical computations in
\cite{Willerton-heuristic} indicate this is unlikely to be true in
higher dimensions.


\section{Magnitude, diversity, and capacity} \label{S:capacity}

Experts in potential theory will have found several of the definitions
and results of Sections \ref{S:weightings} and \ref{S:potential} very
familiar, and recognized by Section \ref{S:Euclidean} that their
specializations to Euclidean space are rather classical.  In this
section, we make this connection explicit, and note that a deep
result about equivalence of capacities in Euclidean space implies an
important relationship between magnitude and maximum diversity, which
will be vital in our analysis of the growth of magnitude functions in
Section \ref{S:dimension} below.

Many definitions and results in potential theory have complicated
histories of successive generalizations.  The reader is referred to
\cite{AdHe} for original references.

For $\alpha > 0$, the \dfn{Bessel kernel} $G_\alpha : \R^n \to \R$
is defined as the function such that
\begin{equation} \label{E:Bessel-kernel}
\widehat{G_\alpha}(x) = (2\pi)^{-n/2} \bigl(1 +
\norm{x}^2\bigr)^{-\alpha / 2};
\end{equation}
see \cite[Section 1.2]{AdHe} for its basic properties.  
(Again, our normalization for the Fourier transform differs from that
of \cite{AdHe}, but the normalizations of $G_\alpha$ and of the norm
on $H^\alpha$---denoted by $L^{\alpha, 2}$ in \cite{AdHe}---are the
same.)  The \dfn{Bessel capacity} of order $\alpha$ of a compact set
$A \subseteq \R^n$ may defined in the following dual ways:
\begin{equation}\label{E:Bessel-capacity}
  \begin{split} C_\alpha(A) &:= \inf \Set{\norm{f}_{H^\alpha}^2}{f \in H^{\alpha}
      \text{ and } f \ge 1 \text{ on } A} \\
    & = \sup_{\mu \in P(A)} \norm{G_\alpha * \mu}_{L^2}^{-2}
    = \sup_{\mu \in P(A) \cap H^{-\alpha}} \norm{\mu}_{H^{-\alpha}}^{-2};
    \end{split}
\end{equation}
see Definition 2.2.6 and Theorem 2.2.7 of \cite{AdHe}.  (There is a
more general $L^p$ version of the Bessel capacity, a subject of
\emph{nonlinear} potential theory, which we need not consider here.)
By \eqref{E:diversity}, \eqref{E:F-FT}, and \eqref{E:Bessel-kernel}, it follows that for a
compact set $A \subseteq \R^n$,
\begin{equation} \label{E:diversity-capacity}
\abs{A}_+ = \frac{1}{n! \omega_n} C_{(n+1)/2}(A).
\end{equation}
Furthermore, the special case $\alpha = (n+1)/2$ of the equality in
\eqref{E:Bessel-capacity} is the same as the special case of
Proposition \ref{T:inf-diversity} for Euclidean space.  A corollary of
\eqref{E:diversity-capacity} is that if $A \subseteq \R$ is compact,
then $\abs{A} = \frac{1}{2} C_1(A)$.  This follows since $\abs{A} =
\abs{A}_+$ for compact $A \subseteq \R$, by \cite[Lemma 2.8]{Meckes}.

Another classical type of capacity\footnote{The author has been unable
  to find a standard name for this capacity in the literature.} of a
compact set $A \subseteq \R^n$ is
\begin{equation}\label{E:N-capacity}
  N_\alpha(A) := \inf \Set{\norm{f}_{H^\alpha}^2}{f \in \mathcal{S} \text{ and }
    f \equiv 1 \text{ on a neighborhood of } A};
\end{equation}
see \cite[Definition 2.7.1]{AdHe}. Note that $C_\alpha(A) \le
N_\alpha(A)$ trivially. Theorem \ref{T:inf-magnitude} indicates that
magnitude in $\R^n$ is closely related to the capacity $N_{(n+1)/2}$,
and in fact that for a compact set $A \subseteq \R^n$,
\begin{equation} \label{E:magnitude-capacity}
\abs{A} \le \frac{1}{n! \omega_n} N_{(n+1)/2}(A).
\end{equation}
There is also a dual formulation of $N_\alpha$, given in \cite[Theorem
2.7.2]{AdHe}, which closely parallels Theorem \ref{T:agreement}.

Although maximum diversity lacks the category-theoretic motivation
behind the definition of magnitude, it is in many ways easier to study
than magnitude due to its representation in terms of positive
measures. An analogous situation appears in potential theory:
$C_\alpha$ is simpler to analyze than $N_\alpha$ since it can also be
represented in terms of positive measures, but $N_\alpha$, whose dual
formulation requires signed measures or more general distributions,
arises naturally in certain applications, cf.\ \cite[p.\ 47]{AdHe}.

The following deep result gives the crucial relationship between the
two capacities $C_\alpha$ and $N_\alpha$ which allows $C_\alpha$ to be
used in some situations in which $N_\alpha$ appears more naturally.

\begin{prop}[{\cite[Corollary
    3.3.4]{AdHe}}] \label{T:equivalent-capacities} For each $\alpha >
  0$ and positive integer $n$ there exists a constant $\kappa(\alpha,
  n) > 0$ such that
  \[
  C_\alpha(A) \le N_\alpha(A) \le \kappa(\alpha, n) C_\alpha(A)
  \]
  for every compact set $A \subseteq \R^n$.
\end{prop}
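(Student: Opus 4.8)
The inequality $C_\alpha(A) \le N_\alpha(A)$ is immediate and requires no work: every competitor $f$ in the definition \eqref{E:N-capacity} of $N_\alpha(A)$ is Schwartz with $f \equiv 1$ on a neighborhood of $A$, hence in particular lies in $H^\alpha$ with $f \ge 1$ on $A$, so it is also admissible in the definition \eqref{E:Bessel-capacity} of $C_\alpha(A)$; the larger admissible class gives the smaller infimum. The entire content of the proposition is therefore the reverse bound $N_\alpha(A) \le \kappa(\alpha,n)\, C_\alpha(A)$, and the plan is to manufacture, from a near-optimal competitor $f$ for $C_\alpha(A)$, a Schwartz competitor for $N_\alpha(A)$ whose $H^\alpha$-norm exceeds $\norm{f}_{H^\alpha}$ by at most a factor depending only on $\alpha$ and $n$.

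The difficulty is that passing from $f$ to such a competitor demands three upgrades simultaneously: replacing the \emph{inequality} $f \ge 1$ by an exact identity $\equiv 1$, enlarging the set from $A$ to an open neighborhood, and smoothing $f$ into $\mathcal{S}$, all while controlling the norm. On the dual side this is exactly the statement that allowing signed measures and distributions (as $N_\alpha$ does, cf.\ its dual formulation) rather than only positive measures (as in $C_\alpha$) inflates the capacity by at most a bounded factor. The naive primal device---composing $f$ with a fixed smooth truncation $\tau$ equal to $1$ on $[1,\infty)$ and flat near $0$---fails precisely in the range of interest, since the composition operator $f \mapsto \tau \circ f$ is bounded on $H^\alpha$ only for $\alpha \le 1$, whereas here $\alpha = (n+1)/2$ is large. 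The tool that circumvents this is the \emph{capacitary strong-type inequality}: there is a constant $A_0 = A_0(\alpha,n)$ such that every nonnegative $f \in H^\alpha$ satisfies
\[
\int_0^\infty C_\alpha\bigl(\Set{x \in \R^n}{f(x) > t}\bigr) \, 2t \, dt \le A_0 \norm{f}_{H^\alpha}^2 .
\]
This is the deep analytic input; it controls the capacities of \emph{all} superlevel sets of $f$ at once, in place of applying any nonlinear operation to $f$ directly, and in a self-contained treatment it is itself established via Maz'ya-type or Wolff-potential estimates.

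Granting this inequality, the construction runs as follows. First I would fix a nonnegative $f \in H^\alpha$ with $f \ge 1$ on $A$ and $\norm{f}_{H^\alpha}^2 \le C_\alpha(A) + \eps$; since $\alpha = (n+1)/2 > n/2$, the Sobolev embedding $H^\alpha \hookrightarrow C^0$ noted after Corollary~\ref{T:H-Hs} makes the pointwise statements literal and the superlevel sets $\Set{x}{f(x) > t}$ genuinely open, which spares us the general quasi-topological bookkeeping. Next I would build a smooth function equal to $1$ near $A$ by stacking smooth cutoffs adapted to the dyadic superlevel sets $\Set{x}{f(x) > 2^{-j}}$ (each of which contains $A$ for $j \ge 0$, where $f$ is largest), with weights chosen so the sum telescopes to the constant $1$ on a neighborhood of $A$; the strong-type inequality is exactly what bounds the total $H^\alpha$-norm of this sum by a constant times $\norm{f}_{H^\alpha}^2$. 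A final mollification lands the result in $\mathcal{S}$ and preserves the value $1$ on a slightly smaller neighborhood, at only a bounded cost in norm (mollification being a Fourier multiplier of modulus at most $1$), and collecting the constants yields $\kappa(\alpha,n)$. The main obstacle is the failure of nonlinear truncation for $\alpha > 1$ identified above, which is precisely why the strong-type inequality, rather than any elementary manipulation of a single competitor, must carry the argument; for the present paper this deep step is legitimately imported from \cite{AdHe}.
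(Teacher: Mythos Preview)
The paper does not prove this proposition at all: it is stated with an explicit citation to \cite[Corollary 3.3.4]{AdHe} and then used as a black box to deduce Corollary~\ref{T:magnitude-diversity}. So there is nothing in the paper to compare your argument against; you have supplied strictly more than the paper does.

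That said, your sketch is a reasonable outline of how the result is actually proved in \cite{AdHe}. You correctly identify that the first inequality is trivial, that the obstruction to a naive argument is the failure of superposition (truncation) on $H^\alpha$ for $\alpha > 1$, and that the substantive input is a capacitary strong-type inequality controlling the distribution of superlevel-set capacities. One small caveat: your description of the construction---``stacking smooth cutoffs adapted to the dyadic superlevel sets \dots\ with weights chosen so the sum telescopes to the constant $1$''---is a bit vague about exactly how the strong-type inequality converts control of the capacities $C_\alpha(\{f > 2^{-j}\})$ into an $H^\alpha$-norm bound on the assembled function; in the actual argument one typically passes through the dual side or invokes a quasi-additivity property of the capacity rather than literally summing cutoffs. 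But as a high-level roadmap for a result the paper is content to import wholesale, your proposal is accurate and appropriately flags where the depth lies.
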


This proposition essentially specializes to the following result about
magnitude.

\begin{cor}\label{T:magnitude-diversity}
  For each positive integer $n$ there exists a $ \kappa_n > 0$ such
  that for each compact set $A \subseteq \R^n$,
  \[
  \abs{A}_+ \le \abs{A} \le \kappa_n \abs{A}_+.
  \]
\end{cor}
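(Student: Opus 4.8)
The plan is to obtain both inequalities by chaining together results already established, exactly as the phrase ``essentially specializes'' suggests. The left inequality $\abs{A}_+ \le \abs{A}$ is precisely \eqref{E:diversity-magnitude}, which holds for any compact PDMS and in particular for compact $A \subseteq \R^n$; no further work is needed there.

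For the right inequality I would fix $\alpha = (n+1)/2$ and combine three facts. First, \eqref{E:magnitude-capacity} gives $\abs{A} \le \frac{1}{n!\omega_n} N_{(n+1)/2}(A)$. Second, Proposition \ref{T:equivalent-capacities} applied with this value of $\alpha$ furnishes a constant $\kappa((n+1)/2, n) > 0$ for which $N_{(n+1)/2}(A) \le \kappa((n+1)/2, n)\, C_{(n+1)/2}(A)$ for every compact $A$. Third, the identity \eqref{E:diversity-capacity} recognizes $\frac{1}{n!\omega_n} C_{(n+1)/2}(A)$ as $\abs{A}_+$. Stringing these together yields
\[
\abs{A} \le \frac{1}{n!\omega_n} N_{(n+1)/2}(A) \le \kappa((n+1)/2,n)\, \frac{1}{n!\omega_n} C_{(n+1)/2}(A) = \kappa((n+1)/2,n)\, \abs{A}_+,
\]
so setting $\kappa_n := \kappa((n+1)/2, n)$ completes the argument.

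Since every ingredient is already in hand, there is no genuine obstacle: the entire content of the corollary is inherited from the deep comparison of capacities in Proposition \ref{T:equivalent-capacities}. The only point demanding care is bookkeeping, namely checking that the normalizing factor $\frac{1}{n!\omega_n}$ enters \eqref{E:magnitude-capacity} and \eqref{E:diversity-capacity} with the same constant so that it cancels cleanly, leaving the capacity-comparison constant as the sole surviving factor. This is immediate from the fact that both identities descend from the Fourier transform computation \eqref{E:F-FT} together with the normalization \eqref{E:Bessel-kernel} of the Bessel kernel.
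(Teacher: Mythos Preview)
Your proposal is correct and matches the paper's own proof essentially line for line: the paper simply states that setting $\alpha = (n+1)/2$, the result follows immediately from \eqref{E:diversity-magnitude}, \eqref{E:diversity-capacity}, \eqref{E:magnitude-capacity}, and Proposition \ref{T:equivalent-capacities}. Your write-up merely unpacks that one sentence into the explicit chain of inequalities and identifies $\kappa_n = \kappa((n+1)/2,n)$.
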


\begin{proof}
  Setting $\alpha = (n+1)/2$, this follows immediately from
  \eqref{E:diversity-magnitude}, \eqref{E:diversity-capacity},
  \eqref{E:magnitude-capacity}, and Proposition
  \ref{T:equivalent-capacities}.
\end{proof}

As mentioned in Section \ref{S:Euclidean}, it is an open question
whether the magnitude function of a compact set $A \subseteq \R^n$ is
nondecreasing.  One consequence of Corollary
\ref{T:magnitude-diversity} is a partial result in this direction.  It
follows immediately from \eqref{E:diversity} that the \dfn{diversity
  function} $t \mapsto \abs{tA}_+$ is nondecreasing for $t > 0$, and
Corollary \ref{T:magnitude-diversity} implies that
\begin{equation} \label{E:magnitude-diversity-functions}
  \abs{tA}_+ \le \abs{tA} \le \kappa_n \abs{tA}_+;
\end{equation}
thus the magnitude function of $A$ is bounded above and below by
multiples of a nondecreasing function.  In particular, for $t \ge s$
we have that
\[
\abs{tA} \ge \abs{tA}_+ \ge \abs{sA}_+ \ge \kappa_n^{-1} \abs{sA},
\]
which roughly says that the magnitude function of $A$ never decreases
very much.  This complements the lower bound in Theorem
\ref{T:magnitude-function}, which yields a sharper estimate when $t$
is close to $s$ but a weaker estimate when $t \gg s$. The estimates in
\eqref{E:magnitude-diversity-functions} are most powerful when $t \to
\infty$, as will be exploited in Corollary \ref{T:Euclidean-dimension}
below.

At this point the reader may imagine that the theory of magnitude in
Euclidean space is already well-explored, albeit under a different
name, in the literature on capacities of sets.  In fact this is quite
far from the truth, and not simply---or even primarily---because of
the slight difference between the sets of functions appearing in
Corollary \ref{T:magnitude-capacity} and \eqref{E:N-capacity}.  In
typical applications, capacities are used to control the size of
``exceptional'' sets, and the principal interest is in sets of
capacity $0$.  Proposition \ref{T:equivalent-capacities} on the
equivalence of $C_\alpha$ and $N_\alpha$ is applied mainly via its
corollary that $C_\alpha(A) = 0$ if and only if $N_\alpha(A) = 0$ (see
\cite[Section 2.9]{AdHe} for discussion and references).  However, for
any $\alpha > n/2$, picking $\mu$ in the supremum in
\eqref{E:Bessel-capacity} to be a point mass shows that $N_\alpha(A)
\ge C_\alpha(A) > 0$ for each nonempty compact set $A \subseteq \R^n$.
From the point of view of traditional applications of capacities,
capacities of order $\alpha > n/2$---including magnitude and maximum
diversity, for which one can easily check that $\abs{A} \ge \abs{A}_+
\ge 1$ for any compact PDMS $A$---are thus somewhat pathological.

On the other hand, from the perspective that $\abs{A}$ is an effective
number of points of $A$, it is perfectly natural that $\abs{A} \ge 1$,
and the principal interest is in the magnitude of large sets,
particularly in the growth of the magnitude function of $A$ at
infinity.  It is thus quite interesting that Proposition
\ref{T:equivalent-capacities}, which was motivated by applications
involving sets of capacity $0$, turns out to be a vital ingredient of
the proof of Corollary \ref{T:Euclidean-dimension} below, which is the
main result about the growth of magnitude functions in Euclidean
space.

\section{Dimensions} \label{S:dimension}

We now turn to the investigation of the asymptotic growth of the
magnitude function.  Suppose that $A$ is a compact metric space of
negative type.  The \dfn{upper magnitude dimension} of $A$ is
\[
\umagdim A := \limsup_{t\to \infty} \frac{\log \abs{tA}}{\log t}
\]
and the \dfn{lower magnitude dimension} of $A$ is
\[
\lmagdim A := \liminf_{t\to \infty} \frac{\log \abs{tA}}{\log t}.
\]
When $\umagdim A = \lmagdim A$, or equivalently $\lim_{t\to \infty}
\frac{\log \abs{tA}}{\log t}$ exists, the \dfn{magnitude dimension}
$\magdim A$ is equal to this limit.  

Magnitude dimensions of various subsets of Euclidean space were
investigated in
\cite{LW,Willerton-heuristic,Leinster,Willerton-homog}. For example,
precise asymptotics of the magnitude function---which in particular
yield the magnitude dimension---were found for line segments
\cite[Theorem 7]{LW}, the Cantor set \cite[Theorem 11]{LW}, and
spheres in $\R^n$ \cite[Theorem 13]{Willerton-homog}.  The magnitude
dimension of the Sierpinski gasket was approximated numerically in
\cite[Section 4]{Willerton-heuristic}. Theorem 3.5.5 of
\cite{Leinster} (which is sharpened by Theorem
\ref{T:magnitude-function} above) implies that subsets of $\R^n$ have
upper magnitude dimension of at most $n$; and \cite[Theorem
3.5.6]{Leinster}, reproved for Euclidean space as Proposition
\ref{T:magnitude-volume} above, implies that subsets of $\R^n$ with
positive volume have magnitude dimension equal to $n$.  Theorems 3.4.8
and 3.5.6 of \cite{Leinster} extend these last two facts to the
$\ell^1$ metric on $\R^n$, and \cite[Theorems 4.4 and 4.5]{Meckes}
extend them to all $\ell^p$ metrics on $\R^n$ for $1 \le p \le 2$, and
in modified form to some related quasinorms\footnote{The last
  paragraph of the published version of \cite{Meckes} misstates the
  consequences for magnitude dimension of those results when the
  quasinorm in question has homogeneity of a degree smaller than
  $1$. A corrected version of \cite{Meckes} has been posted to the
  arXiv.}.  In all these cases the magnitude dimension was found to
agree with classical notions of dimension.

The main result of this section, Corollary
\ref{T:Euclidean-dimension}, unifies and generalizes all the results
about magnitude dimension in Euclidean space by proving that it is
always equal to Minkowski dimension. Toward this goal, we next recall
the definition of Minkowski dimensions for arbitrary compact metric
spaces and prove a new characterization of them in terms of maximum
diversity.

Let $A$ be any compact metric space.  For $\eps > 0$, the \dfn{packing
  number} $M(A,\eps)$ is the maximum number of disjoint closed
$\eps$-balls in $A$, and the \dfn{covering number} $N(A,\eps)$ is the
minimum number of closed $\eps$-balls needed to cover $A$.  The
quantities $\log N(A,\eps)$ and $\log M(A, \eps)$ are called the
$\eps$-\dfn{entropy} and $\eps$-\dfn{capacity} of $A$, respectively.

The \dfn{upper Minkowski dimension} of $A$ is
\[
\uboxdim A := \limsup_{\eps \to 0^+} \frac{\log
  N(A,\eps)}{\log (1/\eps)}
= \limsup_{\eps \to 0^+} \frac{\log
  M(A,\eps)}{\log (1/\eps)}
\]
and the \dfn{lower Minkowski dimension} of $A$ is
\[
\lboxdim A := \liminf_{\eps \to 0^+} \frac{\log
  N(A,\eps)}{\log (1/\eps)}
= \liminf_{\eps \to 0^+} \frac{\log
  M(A,\eps)}{\log (1/\eps)}.
\]
It is an easy exercise to prove the equalities between the two
expressions for each of these dimensions. When $\uboxdim A = \lboxdim
A$, or equivalently $\lim_{\eps \to 0^+} \frac{\log N(A,\eps)}{\log
  (1/\eps)}$ exists, the \dfn{Minkowski dimension} $\boxdim A$ is
equal to this limit.

The \dfn{upper}, \dfn{lower}, and ordinary \dfn{diversity dimension}
of an arbitrary compact metric space $A$ are defined analogously to
magnitude dimensions, using the maximum diversity $\abs{tA}_+$ in
place of the magnitude $\abs{tA}$; we denote them by $\ldivdim$,
$\udivdim$, and $\divdim$ respectively.  Observe that by
\eqref{E:diversity-magnitude}, for any compact space $A$ of negative
type,
\begin{equation} \label{E:divdim-magdim}
\ldivdim A \le \lmagdim A \qquad \text{and} \qquad
\udivdim A \le \umagdim A.
\end{equation}

\begin{thm} \label{T:box-equals-diversity} 
  For any compact metric space $A$, $\ldivdim A = \lboxdim A$ and
  $\udivdim A = \uboxdim A$.  Consequently, $\divdim A$ is defined if
  and only if $\boxdim A$ is defined, and in that case $\divdim A =
  \boxdim A$.
\end{thm}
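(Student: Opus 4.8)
The plan is to prove the two dimension equalities $\ldivdim A = \lboxdim A$ and $\udivdim A = \uboxdim A$ by establishing two-sided comparisons between the maximum diversity $\abs{tA}_+$ and the covering/packing numbers of $A$ at scale $1/t$. Since the dimensions are defined via $\frac{\log \abs{tA}_+}{\log t}$ versus $\frac{\log N(A,\eps)}{\log(1/\eps)}$, it suffices to show that $\abs{tA}_+$ is sandwiched, up to multiplicative and polynomial-in-$t$ factors that vanish after taking $\log$ and dividing by $\log t$, between $M(A, c/t)$ and $N(A, c'/t)$ for suitable constants. Taking $\liminf$ and $\limsup$ as $t \to \infty$ (equivalently $\eps \to 0^+$) then yields both dimension identities simultaneously, and the final ``consequently'' sentence follows formally from the definition of when the ordinary dimensions exist.

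For the \emph{lower} bound on diversity in terms of packing, I would start from the definition \eqref{E:diversity}: to bound $\abs{tA}_+$ below it suffices to exhibit a single good probability measure $\mu \in P(A)$ making the expected similarity small. Given a maximal family of $M = M(A, \eps)$ disjoint closed $\eps$-balls with centers $a_1, \dots, a_M$, I would take $\mu$ to be the uniform measure $\frac{1}{M}\sum_i \delta_{a_i}$ on the centers. Then in the space $tA$ the centers are pairwise at distance at least $2t\eps$, so $\int\int e^{-d} \, d\mu \, d\mu = \frac{1}{M} + \frac{1}{M^2}\sum_{i \neq j} e^{-t\,d(a_i,a_j)}$; choosing $\eps$ comparable to $1/t$ and using that the off-diagonal similarities decay, a volume/packing argument bounds the off-diagonal sum and gives $\abs{tA}_+ \gtrsim M(A, c/t)$. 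For the \emph{upper} bound on diversity in terms of covering, I would instead use the dual characterization from Proposition \ref{T:inf-diversity}, which expresses $\abs{A}_+$ as an infimum of $\norm{h}_{\mathcal{H}}^2$ over $h \in \mathcal{H}$ with $h \ge 1$ on $A$; building a test function $h$ that is a sum of bump functions centered on an $\eps$-net for $A$ (of cardinality $N(A,\eps)$) should give $\abs{tA}_+ \lesssim N(A, c'/t)$ after rescaling, with the norm controlled by the number of bumps.

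The main obstacle I expect is the analytic control of the cross terms and of the test-function norms uniformly in $t$. In the packing lower bound, the difficulty is showing that for balls of radius $\sim 1/t$ the total off-diagonal similarity $\sum_{j \neq i} e^{-t\, d(a_i, a_j)}$ stays bounded (or grows only sub-exponentially) independent of the scale; this requires a counting estimate controlling how many $\eps$-separated points lie in each dyadic annulus around a fixed center, which is a standard but delicate metric-entropy computation. In the covering upper bound, the challenge is to construct admissible $h$ in the correct function space $\mathcal{H}$ with $\norm{h}_{\mathcal{H}}^2$ proportional to $N(A,\eps)$; since $\mathcal{H}$ is abstract for a general PDMS, I would likely need to work intrinsically with the reproducing kernel $e^{-d(x,y)}$ rather than relying on the Euclidean identification $\mathcal{H} = H^{(n+1)/2}$, because the theorem is stated for arbitrary compact metric spaces. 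Once both comparisons $M(A, c/t) \lesssim \abs{tA}_+ \lesssim N(A, c'/t)$ are in hand, the equivalence of $\log N$ and $\log M$ in defining Minkowski dimension (noted in the excerpt) absorbs the constants $c, c'$, and the dimension identities drop out cleanly.
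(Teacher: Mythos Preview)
Your plan has genuine gaps in both directions.

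\textbf{Upper bound.} You propose to bound $\abs{tA}_+$ above via Proposition~\ref{T:inf-diversity}, constructing a test function $h\in\mathcal{H}$ from bumps on an $\eps$-net. But Proposition~\ref{T:inf-diversity} is stated under the standing hypothesis that the ambient space $X$ is a PDMS; the space $\mathcal{H}$ is only defined in that setting. Theorem~\ref{T:box-equals-diversity}, however, is asserted for an \emph{arbitrary} compact metric space $A$, with no positive definiteness assumption. So the dual formulation is simply unavailable. The paper instead works directly from the primal definition~\eqref{E:diversity}: for any $\mu\in P(A)$ one has $\int e^{-t d(a,b)}\,d\mu(b)\ge e^{-t\eps}\mu(B(a,\eps))$, and then Jensen's inequality turns the reciprocal of the double integral into $e^{t\eps}\int \mu(B(a,\eps))^{-1}\,d\mu(a)$, which is bounded by $e^{t\eps}N(A,\eps/2)$ by a short covering argument. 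No function space is needed.

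\textbf{Lower bound.} Your packing measure $\mu=\frac{1}{M}\sum\delta_{a_j}$ is the right object, and the paper uses it too. But your stated plan---take $\eps\sim 1/t$ and control $\sum_{j\ne i} e^{-t\,d(a_i,a_j)}$ by counting packing points in dyadic annuli---fails for general metric spaces: without a doubling condition there is no bound on how many $\eps$-separated points lie in an annulus, so the annulus sum can be arbitrarily large. The paper avoids this entirely. It simply observes that at most one $a_j$ lies within $\eps$ of any given point, so $\int e^{-t d(a,b)}\,d\mu(b)\le \frac{1}{M}+e^{-t\eps}$, giving
\[
\frac{1}{\abs{tA}_+}\le \frac{1}{M(A,\eps)}+e^{-t\eps}.
\]
The key move is then \emph{not} to set $\eps\sim 1/t$ (which, as you can check, leaves an additive $e^{-c}$ that swamps $1/M$), but to make the adaptive choice $\eps(t)=\frac{\log(2\abs{tA}_+)}{t}$, so that $e^{-t\eps(t)}=\frac{1}{2\abs{tA}_+}$ is absorbed and one obtains $M(A,\eps(t))\le 2\abs{tA}_+$. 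One then argues that $\log(1/\eps(t))/\log t\to 1$ along appropriate sequences to recover the Minkowski dimensions. This self-referential choice of scale is the idea your outline is missing.
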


\begin{proof}
  We prove first that $\ldivdim A \le \lboxdim A$ and $\udivdim A \le
  \uboxdim A$.  Let $\eps > 0$, $t > 0$, and $\mu \in P(A)$ be given.
  Observe that for each $a \in A$,
  \begin{equation} \label{E:integral-mu}
  \int e^{-t d(a,b)} \ d\mu(b) \ge \int_{B(a,\eps)} e^{-t d(a,b)} \
  d\mu(b)
  \ge e^{-t\eps} \mu(B(a,\eps)).
  \end{equation}
  Therefore, by Jensen's inequality,
  \begin{align*}
    \left(\int \int e^{-t d(a,b)} \ d\mu(a) \ d\mu(b) \right)^{-1} 
    & \le e^{t\eps} \left( \int \mu(B(a,\eps)) \ d\mu(a)\right)^{-1}
    \le e^{t\eps} \int \frac{1}{\mu(B(a,\eps))} \ d\mu(a).
  \end{align*}
  Now let $N = N(A, \eps/2)$, and let $a_1, \dotsc, a_N \in A$ such
  that $A = \bigcup_{j=1}^N B(a_j, \eps/2)$.  Suppose for the moment
  that $\mu(B(a_j, \eps/2)) > 0$ for each $j$.  If $a \in B(a_j,\eps/2)$
  then $B(a_j, \eps/2) \subseteq B(a,\eps)$, and so
  \begin{align*}
  \int \frac{1}{\mu(B(a,\eps))} \ d\mu(a) 
  &\le \sum_{j=1}^N \int_{B(a_j,\eps/2)} \frac{1}{\mu(B(a,\eps))} \ d\mu(a)
  \le \sum_{j=1}^N \int_{B(a_j,\eps/2)} \frac{1}{\mu(B(a_j,\eps/2))} \ d\mu(a)
  \\
  &= \sum_{j=1}^N \frac{\mu(B(a_j,\eps/2))}{\mu(B(a_j,\eps/2))} = N.
  \end{align*}
  If $\mu(B(a_j, \eps/2) = 0$ for some $j$, the sums above should be
  restricted to those $j$ for which $\mu(B(a_j,\eps/2)) > 0$, and one
  still obtains the upper bound of $N$.  Altogether, we have that
  \[
  \abs{tA}_+ = \sup_{\mu \in P(A)} \left(\int \int e^{-t d(a,b)} \
    d\mu(a) \ d\mu(b) \right)^{-1} 
  \le e^{t\eps} N(A,\eps/2).
  \]
  Setting $\eps = 2/t$, this suffices to prove that $\ldivdim A \le
  \lboxdim A$ and $\udivdim A \le \uboxdim A$.

  We next prove that $\lboxdim A \le \ldivdim A$ and $\uboxdim A \le
  \udivdim A$. Let $\eps > 0$ and $t > 0$ be given.  Let $M = M(A,
  \eps)$, let $a_1, \dotsc, a_M$ be the centers of disjoint closed
  $\eps$-balls in $A$, and define
  \[
  \mu := \frac{1}{M}\sum_{j=1}^M \delta_{a_j} \in P(A).
  \]
  For each $a \in A$, there is at most one $a_j$ in $B(a,\eps)$, and
  so
  \[
  \int e^{-td(a,b)} d\mu(b) 
  = \frac{1}{M} \sum_{j=1}^M e^{-td(a,a_j)} \le \frac{1}{M} + e^{-t\eps}.
  \]
  It follows that
  \[
  \frac{1}{\abs{tA}_+} \le \frac{1}{M(A,\eps)} + e^{-t\eps}.
  \]

  Now define $\eps(t) := \frac{\log(2 \abs{tA}_+)}{t}$ for $t \ge 1$, so
  that \( M(A, \eps(t)) \le 2 \abs{tA}_+ \).  We will prove below that
  $\eps(t)$ is a continuous and strictly decreasing function of $t$;
  for now, assume this to be the case. If $\eps(t)$ is bounded from
  below by a positive constant, then $\ldivdim A = \infty$ and the
  desired inequalities hold trivially.  We may thus assume that
  $\eps(t) \to 0$ as $t \to \infty$, and so
  \[
  \frac{M(A, \eps(t))}{\log (1/\eps(t))}
  \le
  \frac{\log (2 \abs{tA}_+)}{\log (1/\eps(t))}
  =
  \frac{\log \abs{tA}_+}{\log t} \frac{\log t}{\log (1/\eps(t))} + o(1)
  \]
  as $t \to \infty$.  Now
  \[
  \frac{\log(1/\eps(t))}{\log t} = 1 - \frac{\log \log (2\abs{tA}_+)}{\log t},
  \]
  and so if $t_n \to \infty$ such that $\frac{\log \abs{t_n A}_+}{\log
      t_n}$ is bounded above, then $\frac{\log(1/\eps(t_n))}{\log t_n}
    \to 1$ and thus
  \[
  \frac{M(A, \eps(t_n))}{\log (1/\eps(t_n))}
  \le
  \frac{\log \abs{t_n A}_+}{\log t_n} \bigl(1 + o(1)\bigr)
  \]
  as $n \to \infty$.  If $\ldivdim A < \infty$ then $\frac{\log
    \abs{t_n A}_+}{\log t_n}$ is bounded above for some sequence $t_n
  \to \infty$, and so
  \[
  \lboxdim A \le \liminf_{n \to \infty} \frac{M(A, \eps(t_n))}{\log
    (1/\eps(t_n))} 
  \le \liminf_{n \to \infty} \frac{\log \abs{t_n A}_+}{\log t_n}
  = \ldivdim A.
  \]

  If $\udivdim A < \infty$, then $\frac{\log \abs{tA}_+}{\log t}$ is
  bounded above for all $t \ge 1$, and since $\eps:[1,\infty) \to (0,
  \eps(1)]$ is bijective, we similarly obtain that $\uboxdim A <
  \udivdim A$.

  It remains to show that $\eps(t)$ is continuous and strictly
  decreasing on $(1,\infty)$. The continuity follows from
  \cite[Proposition 2.11]{Meckes}.  By definition,
  \[
  \abs{tA}_+^{1/t} = \sup_{\mu \in P(A)} \norm{e^{-d(\cdot, \cdot)}}_{L^t(\mu \otimes
    \mu)}^{-1}.
  \]
  By either H\"older's inequality or Jensen's inequality, for each
  fixed $\mu \in P(A)$, $\norm{e^{-d(\cdot, \cdot)}}_{L^t(\mu \otimes
    \mu)}$ is a nondecreasing function of $t$.  Thus
  $\abs{tA}_+^{1/t}$ is the supremum of a family of nonincreasing
  functions of $t$, hence nonincreasing, and so $2^{1/t}
  \abs{tA}_+^{1/t}$ is a strictly decreasing function of $t$.  The
  claim follows since the logarithm is a strictly increasing function.
\end{proof}

In the setting of Euclidean space, Theorem
\ref{T:box-equals-diversity} amounts to a characterization of
Minkowski dimension in terms of Bessel capacities $C_{(n+1)/2}$.
There are well-known relationships between the Hausdorff dimension of
sets in $\R^n$ and Bessel capacities $C_\alpha$ for $\alpha \le n/2$
(see \cite[Section 5.1]{AdHe}); this connection between Bessel
capacities and Minkowski dimension appears to be new.

Theorem \ref{T:box-equals-diversity} yields the following comparison
between magnitude dimension and Minkowski dimension for general spaces
of negative type.

\begin{cor} \label{T:box-le-mag}
  Let $A$ be a compact metric space of negative type.  Then $\lboxdim
  A \le \lmagdim A$ and $\uboxdim A \le \umagdim A$.
\end{cor}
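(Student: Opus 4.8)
The plan is to obtain the corollary immediately by chaining together two results already in hand. The key observation is that diversity dimension serves as a bridge between the two dimensions in question: Theorem \ref{T:box-equals-diversity} identifies it with Minkowski dimension for an arbitrary compact metric space, while the inequalities \eqref{E:divdim-magdim} bound it below magnitude dimension whenever $A$ is of negative type. Since the corollary's hypothesis supplies both compactness and negative type, each ingredient is available, and the proof reduces to stringing them together.

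First I would apply Theorem \ref{T:box-equals-diversity}, which requires only that $A$ be a compact metric space, to rewrite the Minkowski dimensions as the corresponding diversity dimensions:
\[
\lboxdim A = \ldivdim A \qquad \text{and} \qquad \uboxdim A = \udivdim A.
\]
Next, since $A$ is assumed to be of negative type---so that $tA$ is a PDMS for every $t > 0$ and hence \eqref{E:diversity-magnitude} gives $\abs{tA}_+ \le \abs{tA}$---the inequalities \eqref{E:divdim-magdim} apply, yielding $\ldivdim A \le \lmagdim A$ and $\udivdim A \le \umagdim A$. Combining these with the equalities above produces
\[
\lboxdim A = \ldivdim A \le \lmagdim A \qquad \text{and} \qquad \uboxdim A = \udivdim A \le \umagdim A,
\]
which is exactly the assertion.

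There is no substantive obstacle here, since all of the analytic content has been absorbed into the proof of Theorem \ref{T:box-equals-diversity}. The only point worth checking is that the hypotheses of the two ingredients align, and they do: Theorem \ref{T:box-equals-diversity} needs mere compactness, whereas \eqref{E:divdim-magdim} needs negative type, and both are guaranteed by the hypothesis of the corollary.
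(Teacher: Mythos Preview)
Your proof is correct and follows exactly the same route as the paper's own proof, which simply states that the result follows immediately from Theorem \ref{T:box-equals-diversity} and \eqref{E:divdim-magdim}. Your write-up merely unpacks this one-line justification in slightly more detail.
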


\begin{proof}
  This follows immediately from Theorem \ref{T:box-equals-diversity}
  and \eqref{E:divdim-magdim}.
\end{proof}

For certain classes of compact PDMSs, magnitude is always equal to
maximum diversity, for example for subsets of $\R$, ultrametric spaces
(see the next section for the definition), or homogeneous PDMSs; see
\cite[Lemma 2.8]{Meckes}.  For spaces of negative type with this
property, Theorem \ref{T:box-equals-diversity} implies that magnitude
dimensions and Minkowski dimensions agree.  For example, we have the
following consequence for ultrametric spaces.  (Subsets of $\R$ and
homogeneous spaces are covered by Corollary
\ref{T:Euclidean-dimension} and Proposition \ref{T:homog-dimension}
below.)

\begin{cor} \label{T:ultrametric-dimension} If $A$ is a compact
  ultrametric space, then $\umagdim A = \uboxdim A$ and $\lmagdim A =
  \lboxdim A$.  Consequently, $\magdim A$ is defined if and only if
  $\boxdim A$ is defined, and in that case $\magdim A = \boxdim A$.
\end{cor}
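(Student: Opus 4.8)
The plan is to reduce the corollary to Theorem \ref{T:box-equals-diversity} by using that magnitude and maximum diversity coincide on ultrametric spaces. The key observation is that every rescaling $tA = (A, td)$ of an ultrametric space is again ultrametric, since multiplying the metric by a positive constant preserves the strong triangle inequality $d(x,z) \le \max\{d(x,y), d(y,z)\}$. Thus each $tA$ is a compact ultrametric space, and as noted in the discussion preceding this corollary such spaces are of negative type, so the magnitude function $t \mapsto \abs{tA}$ and the magnitude dimensions $\umagdim A$ and $\lmagdim A$ are all well defined.

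First I would invoke \cite[Lemma 2.8]{Meckes}, which gives $\abs{B} = \abs{B}_+$ for any compact ultrametric PDMS $B$. Applying this to $B = tA$ for each $t > 0$ shows that $\abs{tA} = \abs{tA}_+$ for all $t > 0$; that is, the magnitude function and the diversity function of $A$ are identical. Taking $\limsup$ and $\liminf$ of $\frac{\log \abs{tA}}{\log t} = \frac{\log \abs{tA}_+}{\log t}$ as $t \to \infty$ then yields
\[
\umagdim A = \udivdim A \qquad \text{and} \qquad \lmagdim A = \ldivdim A.
\]

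Next I would apply Theorem \ref{T:box-equals-diversity}, which asserts that $\udivdim A = \uboxdim A$ and $\ldivdim A = \lboxdim A$ for every compact metric space. Composing these two chains of equalities gives $\umagdim A = \uboxdim A$ and $\lmagdim A = \lboxdim A$, which is the first assertion. The statement about $\magdim A$ and $\boxdim A$ is then immediate: each of these ordinary dimensions exists precisely when its upper and lower counterparts agree, and in that event all four quantities are equal.

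There is no substantive obstacle in this argument, which is a direct composition of two results already in hand. The only point deserving care is that \cite[Lemma 2.8]{Meckes} must be invoked for every rescaled space $tA$, not merely for $A$ itself; this is legitimate precisely because ultrametricity and negative type are scale-invariant, which is why I record that fact at the outset.
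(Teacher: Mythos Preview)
Your proof is correct and follows exactly the argument the paper outlines just before the corollary: use \cite[Lemma 2.8]{Meckes} to identify $\abs{tA}$ with $\abs{tA}_+$ for each $t>0$ (legitimate since rescalings of ultrametric spaces are ultrametric and hence of negative type), and then apply Theorem~\ref{T:box-equals-diversity}.
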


When combined with Proposition \ref{T:equivalent-capacities}, Theorem
\ref{T:box-equals-diversity} has the deeper consequence is that
magnitude dimensions and Minkowski dimensions always agree in
Euclidean space.

\begin{cor} \label{T:Euclidean-dimension} If $A \subseteq \R^n$ is
  compact, then $\umagdim A = \uboxdim A$ and $\lmagdim A = \lboxdim
  A$.  Consequently, $\magdim A$ is defined if and only if $\boxdim A$
  is defined, and in that case $\magdim A = \boxdim A$.
\end{cor}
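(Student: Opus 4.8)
The plan is to combine the two-sided comparison between magnitude and maximum diversity in Euclidean space (Corollary \ref{T:magnitude-diversity}) with the identification of diversity dimensions and Minkowski dimensions for arbitrary compact metric spaces (Theorem \ref{T:box-equals-diversity}). Since $A \subseteq \R^n$, every dilate $tA$ is again a compact subset of $\R^n$, so Corollary \ref{T:magnitude-diversity} applies to $tA$ with the \emph{same} constant $\kappa_n$; this uniformity in $t$ is the crucial feature that makes the argument work.

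First I would record, for each $t > 0$, the chain
\[
\abs{tA}_+ \le \abs{tA} \le \kappa_n \abs{tA}_+
\]
obtained by applying Corollary \ref{T:magnitude-diversity} to $tA$ (this is exactly \eqref{E:magnitude-diversity-functions}). Taking logarithms and dividing by $\log t$ for $t > 1$ gives
\[
\frac{\log \abs{tA}_+}{\log t} \le \frac{\log \abs{tA}}{\log t} \le
\frac{\log \kappa_n}{\log t} + \frac{\log \abs{tA}_+}{\log t}.
\]
Since $\frac{\log \kappa_n}{\log t} \to 0$ as $t \to \infty$, the upper and lower bounds have the same $\limsup$ and the same $\liminf$ as $\frac{\log\abs{tA}_+}{\log t}$, so the squeeze yields $\umagdim A = \udivdim A$ and $\lmagdim A = \ldivdim A$.

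Finally I would invoke Theorem \ref{T:box-equals-diversity}, which asserts $\udivdim A = \uboxdim A$ and $\ldivdim A = \lboxdim A$ for any compact metric space. Combining the two sets of equalities produces $\umagdim A = \uboxdim A$ and $\lmagdim A = \lboxdim A$, and the statement that $\magdim A$ is defined exactly when $\boxdim A$ is, with the two agreeing in that case, follows at once. I do not expect any genuine obstacle here, since the substance is entirely carried by the earlier results; the only point demanding care is that the comparison constant $\kappa_n$ is independent of the scale $t$, which is precisely why dividing by $\log t$ annihilates its contribution in the limit.
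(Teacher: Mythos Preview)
Your proposal is correct and is essentially identical to the paper's proof: the paper likewise cites \eqref{E:magnitude-diversity-functions} (i.e., Corollary \ref{T:magnitude-diversity} applied to $tA$) to deduce $\umagdim A = \udivdim A$ and $\lmagdim A = \ldivdim A$, and then invokes Theorem \ref{T:box-equals-diversity}. You have simply made explicit the logarithm-and-divide-by-$\log t$ step that the paper leaves implicit.
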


\begin{proof}
  For a compact set $A \subseteq \R^n$,
  \eqref{E:magnitude-diversity-functions} implies that $\umagdim A =
  \udivdim A$ and $\lmagdim A = \ldivdim A$, and the result follows
  from Theorem \ref{T:box-equals-diversity}.
\end{proof}

It remains an open question whether magnitude dimension is equal to
Minkowski dimension for each compact metric space of negative type.

As mentioned earlier, another approach to defining magnitude for
infinite spaces is to define a \dfn{weight measure} for a compact
metric space $A$ to be a signed measure $\mu \in M(A)$ such that, for
each $a \in A$,
\[
\int e^{-d(a,b)} \ d\mu(b) = 1;
\]
and then define $\abs{A} := \mu(A)$ whenever $\mu$ is a weight measure
for $A$.  This clearly extends the original definition
\ref{D:finite-magnitude} for the magnitude of finite spaces, and, as
discussed in Section \ref{S:potential}, can be proved to coincide with
Definition \ref{D:compact-magnitude} whenever $A$ is a compact PDMS
which possesses a weight measure.  If $tA$ possesses a weight measure
for each $t > 0$, then we define the magnitude function and upper,
lower, and ordinary magnitude dimensions of $A$ as before.

This definition of magnitude is useful in particular when $A$ is a
compact homogeneous metric space (i.e., the isometry group acts
transitively on the points of $A$).  In this case there exists a
unique isometry-invariant probability measure $\mu \in P(A)$ (see,
e.g., \cite[Theorem 1.3]{MiSc}), which is also isometry-invariant on
$tA$ for each $t > 0$.  Theorem 1 of \cite{Willerton-homog} then shows
that an appropriate scalar multiple of $\mu$ is a weight measure for
$tA$, and for each $a \in A$,
\begin{equation} \label{E:homog-magnitude}
  \abs{tA} = \left(\int e^{-td(a,b)} \ d\mu(b)\right)^{-1}.
\end{equation}

Using this definition of magnitude, precise asymptotics for the
magnitude function of a compact homogeneous Riemannian manifold were
found in \cite[Theorem 11]{Willerton-homog}; these imply that for such
manifolds, the magnitude dimension equals the usual dimension.
Similar arguments as in the proof of Theorem
\ref{T:box-equals-diversity} generalize this fact---with Minkowski
dimension in place of the dimension of a manifold---to arbitrary
compact homogeneous metric spaces.  The existence of an invariant
weight measure takes the place in this setting of the equivalence of
capacities from Proposition \ref{T:equivalent-capacities}.

\begin{prop} \label{T:homog-dimension} If $A$ is a compact homogeneous
  metric space then $\umagdim A = \uboxdim A$ and $\lmagdim A =
  \lboxdim A$.  Consequently, $\magdim A$ is defined if and only if
  $\boxdim A$ is defined, and in that case $\magdim A = \boxdim A$.
\end{prop}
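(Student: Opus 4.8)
The plan is to exploit the explicit formula \eqref{E:homog-magnitude} and then reproduce, almost line for line, the two-sided argument in the proof of Theorem \ref{T:box-equals-diversity}, with the isometry-invariant probability measure $\mu$ playing the role that the equivalence of capacities played there. Write $I(t) := \int e^{-t d(a,b)} \, d\mu(b)$, which by homogeneity is independent of the choice of $a \in A$, so that \eqref{E:homog-magnitude} reads $\abs{tA} = I(t)^{-1}$. The only structural feature of $\mu$ I will use is that, since $\mu$ is invariant and the isometry group acts transitively, every closed $\eps$-ball has the same measure $v(\eps) := \mu(B(a,\eps))$.

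First I would prove $\umagdim A \le \uboxdim A$ and $\lmagdim A \le \lboxdim A$. Covering $A$ by $N = N(A,\eps)$ closed $\eps$-balls and summing gives $1 = \mu(A) \le N v(\eps)$, whence $v(\eps) \ge 1/N(A,\eps)$; restricting the integral defining $I(t)$ to $B(a,\eps)$ then yields $I(t) \ge e^{-t\eps} v(\eps) \ge e^{-t\eps}/N(A,\eps)$, i.e.\ $\abs{tA} \le e^{t\eps} N(A,\eps)$. Taking $\eps = 1/t$, passing to logarithms, dividing by $\log t$, and letting $t \to \infty$ gives both inequalities.

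For the reverse inequalities I would instead bound $I(t)$ from above. Splitting the integral at radius $\eps$ gives $I(t) \le v(\eps) + e^{-t\eps}$, and since the packing centers $a_1,\dots,a_M$ (with $M = M(A,\eps)$) are centers of \emph{disjoint} $\eps$-balls of common measure $v(\eps)$, we have $M v(\eps) \le 1$, so $v(\eps) \le 1/M(A,\eps)$. Thus $\abs{tA}^{-1} = I(t) \le \frac{1}{M(A,\eps)} + e^{-t\eps}$, which is exactly the inequality obtained for $\abs{tA}_+$ in the proof of Theorem \ref{T:box-equals-diversity}. From here the remainder of that proof applies verbatim with $\abs{tA}_+$ replaced by $\abs{tA}$: one sets $\eps(t) := \frac{\log(2\abs{tA})}{t}$, checks that it is continuous and strictly decreasing, and then the packing/entropy estimates deliver $\lboxdim A \le \lmagdim A$ and $\uboxdim A \le \umagdim A$.

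The two properties of $\eps(t)$ are in fact easier to verify here than in the diversity setting. Continuity is immediate because $I(t)$ is continuous (indeed smooth) in $t$ by dominated convergence and is strictly positive. For strict monotonicity, observe that $\abs{tA}^{1/t} = I(t)^{-1/t} = \norm{e^{-d(a,\cdot)}}_{L^t(\mu)}^{-1}$, and since $\mu$ is a probability measure the norm $\norm{e^{-d(a,\cdot)}}_{L^t(\mu)}$ is nondecreasing in $t$; hence $2^{1/t}\abs{tA}^{1/t}$ is strictly decreasing and so is its logarithm $\eps(t)$. I expect the only genuine content—the main obstacle, such as it is—to be the single observation that homogeneity forces all $\eps$-balls to have equal $\mu$-measure, which simultaneously supplies the lower bound $v(\eps) \ge 1/N(A,\eps)$ needed in the first direction and the upper bound $v(\eps) \le 1/M(A,\eps)$ needed in the second. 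This one fact is what substitutes, in the homogeneous setting, for the deep capacity comparison of Proposition \ref{T:equivalent-capacities}.
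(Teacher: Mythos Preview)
Your proposal is correct and follows essentially the same route as the paper: both directions are obtained by combining the explicit formula \eqref{E:homog-magnitude} with the equality of all $\eps$-ball measures under the invariant probability measure, yielding $\abs{tA} \le e^{t\eps} N(A,\eps)$ and $\abs{tA}^{-1} \le \frac{1}{M(A,\eps)} + e^{-t\eps}$, after which the paper simply says ``the proof is completed as in the second half of the proof of Theorem~\ref{T:box-equals-diversity}.'' Your verification that $\eps(t)$ is continuous (via dominated convergence on $I(t)$) and strictly decreasing (via the single-measure $L^t$-norm monotonicity) is in fact more self-contained than the paper's, which defers to the diversity version without comment.
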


\begin{proof}
  Let $\mu$ be the unique isometry-invariant probability measure on
  $A$. Let $N = N(A,\eps)$, and let $a_1, \dotsc, a_N \in A$ such that
  $A = \bigcup_{j=1}^N B(a_j, \eps)$. Then for each $a \in A$,
  \[
  1 = \mu(A) \le \sum_{j=1}^N \mu(B(a_j,\eps)) = N \mu(B(a,\eps)).
  \]
  Together with \eqref{E:integral-mu} and \eqref{E:homog-magnitude},
  this implies that
  \[
  \abs{tA} \le e^{t\eps} N(A, \eps).
  \]
  Setting $\eps = 1/t$, this suffices to prove that $\lmagdim A \le
  \lboxdim A$ and $\umagdim A \le \uboxdim A$.
  
  Similarly, if $M = M(A,\eps)$ and $a_1, \dotsc, a_M \in A$ are the
  centers of disjoint balls of radius $\eps$, then for each $a \in A$,
  \[
  1 = \mu(A) \ge \sum_{j=1}^M \mu(B(a_j,\eps)) = M \mu(B(a,\eps)).
  \]
  Therefore,
  \begin{align*}
  \int e^{-t d(a,b)} \ d\mu(b) &= \int_{B(a,\eps)} e^{-td(a,b)} \ d\mu(b)
  + \int_{A \setminus B(a,\eps)} e^{-td(a,b)} \ d\mu(b) \\
  & \le \mu(B(a,\eps)) + e^{-t\eps}
  \le \frac{1}{M(A,\eps)} + e^{-t\eps}.
  \end{align*}
  Together with \eqref{E:homog-magnitude}, this implies that
  \[
  \frac{1}{\abs{tA}} \le \frac{1}{M(A,\eps)} + e^{-t\eps},
  \]
  and the proof is completed as in the second half of the proof of
  Theorem \ref{T:box-equals-diversity}.
\end{proof}


\section{Afterword: ultramagnitude of ultrametric
  spaces} \label{S:ultrametric}

As discussed in the introduction, the magnitude of a finite metric
space is a special case of the more general notion of the magnitude of
a finite enriched category, presented in \cite[Section 1]{Leinster}.
Besides the cases of ordinary categories (for which magnitude is known
as Euler characteristic, and is related to more classical invariants
of that name) and of metric spaces, the magnitude of enriched
categories has mostly not yet been very fully explored (see
\cite{Leinster-cafe-enriched} for a discussion).  In this section, we
work out another special case, that of ultrametric spaces.  This leads
to an extremely simple notion of the size of an ultrametric space,
whose theory is similar to, but drastically simpler than, the theory
of magnitude of metric spaces.  In particular, the notions of
packings, coverings, and Minkowski dimensions, which played central
roles in the previous section, come immediately out of this theory.

An \dfn{ultrametric space} is a metric space $(A, d)$ which satisfies
the strengthened triangle inequality
\[
d(a,c) \le \max \bigl\{ d(a,b), d(b,c) \bigr\}
\]
for each $a,b,c \in A$.  That is, one obtains the definition of an
ultrametric space by replacing the binary operation $+$ in the
definition of a metric space with the binary operation $\max$.  Of
course, ultrametric spaces are in particular metric spaces, and are
even always positive definite (see \cite[Proposition 2.4.18]{Leinster}
or \cite[Theorem 3.6]{Meckes}).  Thus, one can speak of the magnitude
of a compact ultrametric space, as we have done in Corollary
\ref{T:ultrametric-dimension} above.  However, one obtains a different
notion if one appropriately substitutes the operation of $\max$ for
the operation $+$, not in Definition \ref{D:finite-magnitude} itself,
but in the category-theoretic considerations which motivate that
definition.  To distinguish this new notion from the magnitude of $A$
when thought of simply as a metric space, we will call it
``ultramagnitude''.

Definition \ref{D:finite-magnitude} of the magnitude of a metric space
$A$ is built around the matrix $\zeta(a,b) = e^{-d(a,b)}$, the
motivation for which was not explained in this paper.  We will now
explain just the part of its motivation which needs to be modified for
ultrametric spaces. The reader is referred to \cite[Section
1]{Leinster} and the references therein for the full definition of
magnitude of an enriched category and its category-theoretic
background, or to \cite[Section 1.1]{LW} for a brief summary.  Here we
will bring up only the essential minimum, noting for experts that
whereas a metric space is a category enriched over the monoidal
category $\bigl( ([0,\infty), \ge), +, 0 \bigr)$, an ultrametric space
is a category enriched over $\bigl( ([0,\infty), \ge), \max,0 \bigr)$.

When specialized to metric spaces, Leinster's definition of the
magnitude of an enriched category calls for a function
$\Phi:[0,\infty) \to \R$ such that
\[
\Phi(x+y) = \Phi(x) \Phi(y)
\]
for each $x,y \in [0,\infty)$. One then defines $\zeta(a,b) =
\Phi(d(a,b))$.  Here the domain $[0,\infty)$ is the set of possible
distances (the objects of the enriching category) and the operation
$+$ in $x+y$ is the same operation appearing in the triangle
inequality (the tensor product in the enriching category).  If $\Phi$
is to be Lebesgue measurable, we must have $\Phi(x) = \alpha^x$ for
some $\alpha \ge 0$ (see \cite{Frechet}); the choice of
$\alpha=e^{-1}$ is the arbitrary choice of scale in Definition
\ref{D:finite-magnitude} which is addressed by considering magnitude
functions.

To adapt Definition \ref{D:finite-magnitude} to ultrametric spaces, we
instead need a function $\Psi:[0,\infty) \to \R$ such that
\[
\Psi\bigl(\max \{x,y\}\bigr) = \Psi(x) \Psi(y)
\]
for each $x, y \in [0,\infty)$. This implies that $\Psi$ must be the
indicator function of some interval $[0,\beta]$ or $[0,\beta)$.  We
choose $\beta=1$, which, like the choice $\alpha = e^{-1}$ above,
amounts to a convenient but arbitrary choice of scale.  We furthermore
pick $\Psi$ to be the indicator function of $[0,1]$, which amounts to
a choice to work with closed balls as opposed to open balls.  The
general definition of the magnitude of a finite enriched category then
specializes to ultrametric spaces in the following way.

\begin{defn} \label{D:ultramagnitude} Given a finite ultrametric space
  $(A, d)$, define the matrix $\xi \in \R^{A \times A}$ by
\[
\xi(a,b) := \begin{cases} 1 & \text{if } d(a,b) \le 1, \\
  0 & \text{if } d(a,b) > 1.
  \end{cases}
\]
A vector $w \in \R^A$ is an \dfn{ultraweighting} for $A$ if for each $a \in
A$,
\[
  (\xi w)(a) = \sum_{b \in A} \xi(a,b) w(b) = 1.
\]
If $A$ possesses an ultraweighting $w$, then the \dfn{ultramagnitude}
of $A$ is
\[
  \abs{A}_U := \sum_{a \in A} w(a).
\]
\end{defn}

Observe that the matrix $\xi$ used in Definition
\ref{D:ultramagnitude} is a discretization of the matrix $\zeta$ from
Definition \ref{D:finite-magnitude}.  This suggests that magnitude
should reflect finer information than ultramagnitude.

The entire theory of the ultramagnitude of finite ultrametric spaces
can be summed up in the following result.

\begin{prop} \label{T:ultramagnitude} Let $A$ be a finite ultrametric
  space.  Then $\abs{A}_U = N(A,1) = M(A,1)$.
\end{prop}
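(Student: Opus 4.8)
The plan is to exploit the defining feature of an ultrametric space: the relation $a \sim b$ defined by $d(a,b) \le 1$ is an equivalence relation. Reflexivity and symmetry are immediate, and transitivity is exactly the strengthened triangle inequality, since $d(a,b) \le 1$ and $d(b,c) \le 1$ force $d(a,c) \le \max\{d(a,b), d(b,c)\} \le 1$. The equivalence class of $a$ is then precisely the closed unit ball $B(a,1)$, and consequently two closed unit balls are either identical or disjoint. I would first record these facts, as they are the only place where the ultrametric hypothesis is used; everything afterward is bookkeeping.

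Next I would compute $\abs{A}_U$. Let $C_1, \dotsc, C_m$ be the equivalence classes. Ordering the points of $A$ so that each class occupies a consecutive block, the matrix $\xi$ becomes block diagonal, the block indexed by $C_i$ being the all-ones matrix of size $\abs{C_i}$. The ultraweighting condition $(\xi w)(a) = 1$ thus decouples across blocks, and for $a \in C_i$ it reads $\sum_{b \in C_i} w(b) = 1$. In particular an ultraweighting always exists—for instance $w(a) = 1/\abs{C_i}$ for $a \in C_i$—so $\abs{A}_U$ is always defined; and no matter which ultraweighting is chosen, the weights on each class sum to $1$. Summing over the $m$ classes gives $\abs{A}_U = \sum_{a \in A} w(a) = m$.

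It then remains to identify $m$ with both $M(A,1)$ and $N(A,1)$. Since every closed unit ball is one of the classes $C_i$, and distinct classes are disjoint, a family of pairwise disjoint closed unit balls is exactly a subfamily of $\{C_1, \dotsc, C_m\}$; the largest such family consists of all $m$ of them, so $M(A,1) = m$. Dually, since the classes partition $A$ and no closed unit ball can meet two different classes, any cover of $A$ by closed unit balls must use all $m$ classes, whence $N(A,1) = m$. Combining the three computations yields $\abs{A}_U = N(A,1) = M(A,1)$.

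I do not expect a genuine obstacle: the entire content is the observation that in an ultrametric space the closed balls of a fixed radius partition the space into equivalence classes. The only point requiring a word of care is the well-definedness of $\abs{A}_U$ despite the non-uniqueness of the ultraweighting, and this is settled by noting that the per-class sum of weights is forced to equal $1$. Once the partition picture is in place, the ultramagnitude reduces to counting the blocks of an all-ones block-diagonal matrix, and the packing and covering numbers both reduce to counting the same blocks, so the three quantities coincide essentially by inspection.
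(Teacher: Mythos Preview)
Your proposal is correct and follows essentially the same approach as the paper: both arguments rest on the observation that closed unit balls in an ultrametric space partition $A$ into equivalence classes, exhibit the ultraweighting $w(a) = 1/\#B(a,1)$, and compute $\abs{A}_U$ by summing $1$ over the classes. Your write-up is more detailed (explicitly verifying transitivity, describing the block structure of $\xi$, and addressing well-definedness), but the underlying idea is identical.
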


\begin{proof}
  It is an easy exercise to check that the closed balls of radius $1$
  in an ultrametric space $A$ form a partition of $A$, which must thus
  consist of $N(A,1) = M(A,1)$ distinct balls.  An ultraweighting for
  $A$ is given by
  \[
  w(a) = \bigl( \# B(a,1) \bigr)^{-1},
  \]
  where $\#$ denotes cardinality.  Therefore,
  \[
  \abs{A}_U = \sum_{a\in A} w(a) = \sum_{\text{distinct }B(a,1)} 1 =
  N(A,1).
  \qedhere
  \]
\end{proof}

Given Proposition \ref{T:ultramagnitude}, it is simple to extend the
definition of ultramagnitude to compact ultrametric spaces in a
natural way: we simply let $\abs{A}_U = N(A,1)$.  (One can also arrive
at this definition, with rather more effort, by appropriately
modifying the approach of either \cite{Meckes} or Section
\ref{S:weightings} above, but we will not pursue this here.)  For
$t>0$, it follows that $\abs{tA}_U = N(A,1/t) = M(A,1/t)$.  Thus the
\dfn{ultramagnitude function} $t \mapsto \abs{tA}_U$ contains
precisely the same information as the $\eps$-entropy or
$\eps$-capacity of a compact ultrametric space, and it follows
trivially that ``ultramagnitude dimension'' is equal to Minkowski
dimension.


\end{document}